\newcommand{\e}{{\mathbf E}}
\newcommand{\p}[1]{{\mathbf P}\left(#1\right)}
\newtheorem{thm}{Theorem}
\newtheorem{lem}[thm]{Lemma}
\newtheorem{cor}[thm]{Corollary}
\newtheorem{prop}[thm]{Proposition}
\newcommand\cS{{\mathcal S}}
\newcommand\cT{{\mathcal T}}
\newcommand{\pran}[1]{\left(#1\right)}
\newcommand{\eqdist}{\ensuremath{\stackrel{\mathrm{d}}{=}}}
\newcommand{\psub}[2]{{ \mathbf P}_{#1}\left( #2 \right)}
\definecolor{darkgreen}{rgb}{0.05,0.45,0.1}
\definecolor{linkgreen}{rgb}{0,0.5,0}
\begin{document}

\title{The mixing time of the Newman--Watts small world}
\author{Louigi Addario-Berry \and Tao Lei}
\address{Department of Mathematics and Statistics, McGill University, 805 Sherbrooke Street West,
		Montr\'eal, Qu\'ebec, H3A 2K6, Canada}
\email{louigi@math.mcgill.ca}
\email{tao.lei@mail.mcgill.ca}
\date{January 19, 2012} %; revised ...
%\urladdrx{http://www.math.mcgill.ca/~louigi/}

\keywords{Random graph, small world, mixing time, rapid mixing}
\subjclass[2000]{60C05}

%{60C05 (68P10,68W40)} %%{Primary: <subject>; Secondary: <subject>}
\begin{abstract}
``Small worlds'' are large systems in which any given node has only a few connections to other points, but possessing the property that all pairs of points are connected by a short path, typically logarithmic in the number of nodes. The use of random walks for sampling a uniform element from a large state space is by now a classical technique; to prove that such a technique works for a given network, a bound on the mixing time is required. However, little detailed information is known about the behaviour of random walks on small-world networks, though many predictions can be found in the physics literature. The principal contribution of this paper is to show that for a famous small-world random graph model known as the Newman--Watts small world, the mixing time is of order $\log^2 n$. This confirms a prediction of Richard Durrett, who proved a lower bound of order $\log^2 n$ and an upper bound of order $\log^3 n$.
\end{abstract}

\maketitle

\section{Introduction}\label{sec:intro}
	The small-world phenomenon is a catchy name for an important physical phenomenon that shows up throughout the physical, biological and social sciences.
	In brief, the term applies to large, locally sparse systems (usually, possessing only a bounded number of connections from any given point) which nonetheless exhibit good long-range connectivity in the sense that there are short paths between all points in the system. The Erd\H{o}s--R\'enyi random graph $G_{n,p}$, is perhaps the most mathematically famous model possessing small-world behaviour: when $p=c/n$ for $c>1$ fixed, the average vertex degree is $c$, and the diameter of the largest connected
component is \cite{riordan2010diameter}
\[
\frac{\log n}{\log c} + 2 \frac{\log n}{\log(1/c^*)} + O_p(1),
\]
where $c^* < 1$ satisfies $ce^{-c}=c^*e^{-c^*}$ and $O_p(1)$ denotes a random amount that remains bounded in probability as $n \to \infty$.

The Erd\H{o}s--R\'enyi random graph is unsatisfactory as a small-world model in two ways: first, the network does not satisfy full connectivity (a constant proportion of vertices lie outside of the giant component); second, the graph is locally tree-like -- for any fixed $k$, the probability that there is a cycle of length at most $k$ through a randomly chosen node node is $o(1)$. In real-world networks showing small world behaviour (social or business networks, gene regulatory networks, networks for modelling infectious disease spread, scientific collaboration networks, and many others -- the book \cite{newman2006structure} contains many interesting examples), full or almost-full connectivity is standard, and short cycles are plentiful. Several connected models have been proposed which in some respects capture the desired local structure as well as small-world behaviour, notably the Bollob\'as--Chung \cite{bollobas1988diameter}, Watts--Strogatz \cite{watts1998collective}, and Newman--Watts \cite{newman1999scaling,newman1999renormalization} models. These models are closely related -- all are based on adding sparse, long range connections to a connected ``base network'' which is essentially a cycle.

Understanding the behaviour of random walks on small-world networks remains, in general, a challenging open problem. Numerical and non-rigorous results for return probabilities \cite{jespersen2000relaxation}, relaxation times \cite{noh2004random}, spectral properties \cite{farkas01spectra}, hitting times \cite{jasch2001target, condamin2007first}, and diffusivity \cite{gallos2007scaling} appear in the physics literature, but few rigorous results are known. In \cite{durrett2007random}, Durrett considers the Newman--Watts small world, proving lower and upper bounds on the mixing time of order $\log^2 n$ and $\log^3 n$, respectively, and suggests that the lower bound should in fact be correct. The principle contribution of this paper is to confirm Durrett's prediction. (Theorem~\ref{thm:main}, below).
\begin{wrapfigure}{r}{0.3\textwidth}
\vspace{-0.2cm}
\centering
\includegraphics[width=0.3\textwidth]{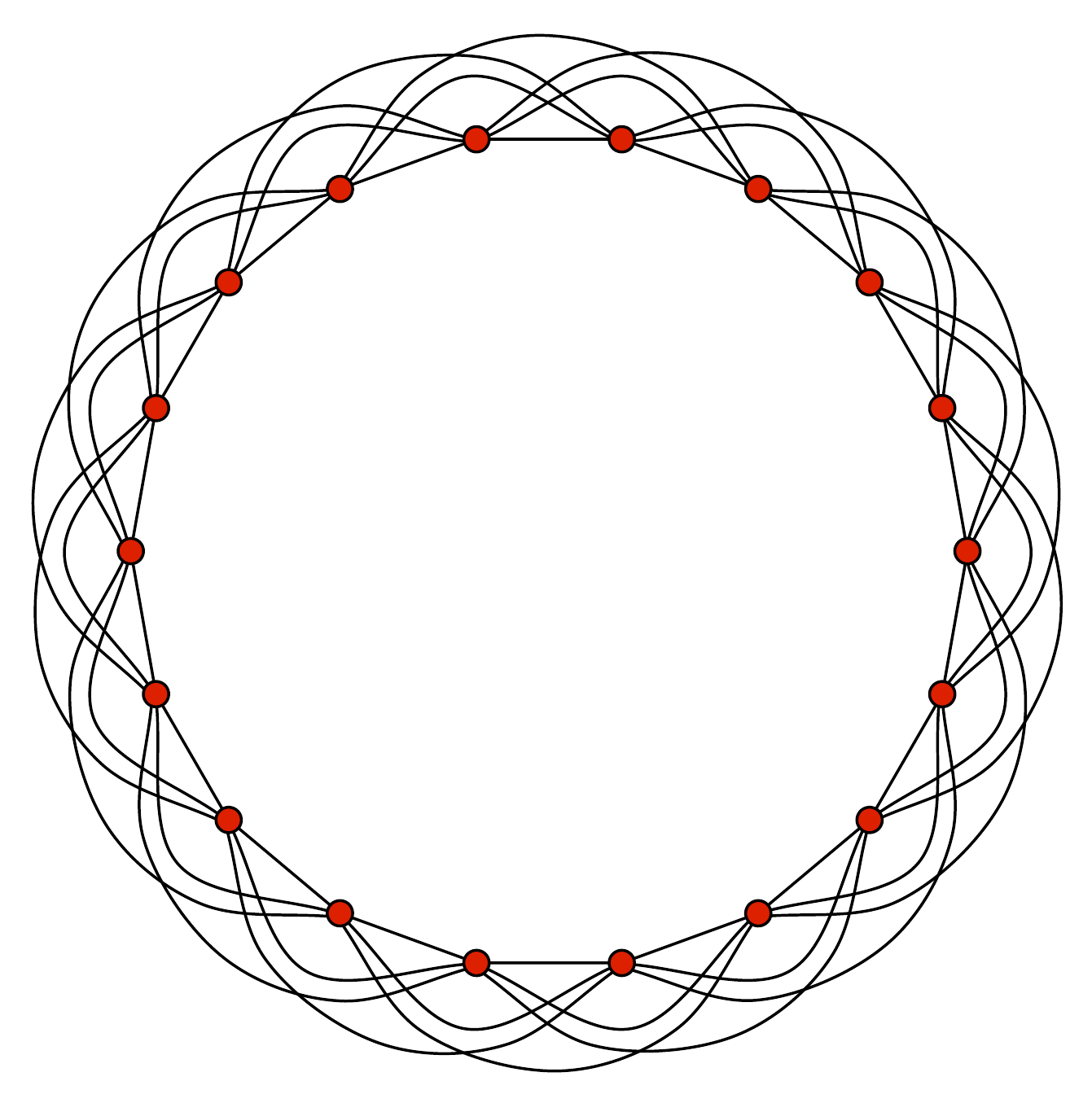}
%\caption{$R_{18,3}$}
\noindent
%\hrulefill
%\label{test}
\end{wrapfigure}

\vspace{-0.2cm}
To define the Newman--Watts small world, first fix integers $n \geq k \geq 1$. The {\em $(n,k)$-ring} $R_{n,k}$ is the graph with vertex set $[n]=\{1,\ldots,n\}$ and edge set
$\{\{i,j\}: i+1 \leq j \leq i+k\},$
where addition is interpreted modulo $n$. (A picture of $R_{18,3}$ appears to the right.) In particular, an $(n,1)$-ring is a cycle of length $n$, and whenever $n > 2k$ the $(n,k)$ ring is regular of degree $2k$.
For $0 < p < 1$, the $(n,k,p)$ Newman--Watts small world $H_{n,k,p}$ is the random graph obtained from the $(n,k)$-ring by independently replacing each non-edge of the $(n,k)$-ring by an edge with probability $p$. We write $H$ as shorthand for $H_{n,k,p}$ whenever the parameters are clear from context.

Given a (finite, simple) graph $G=(V,E)$, by a {\em lazy simple random walk on $G$} we mean a random walk that at each step stays still with probability $1/2$, and otherwise moves to a uniformly random neighbour. In other words, this is a Markov chain with state space $V$ and transition probabilities
\[
p_{x,y} = \begin{cases} \frac{1}{2} & \mbox{ if } x=y \\
					\frac{1}{2d_G(x)} & \mbox{ if } y\in N_G(x)\\
                     0 & \mbox{ otherwise }
		\end{cases}
\]
where $d_G(x)$ denotes the number of neighbours of $x$ in $G$ and $N_G(x)$ denotes the collection of neighbours of $x$ in $G$. (More generally, we shall say a chain is {\em lazy} if $p_{x,x}\ge 1/2$ for all $x$ in the state space.) If $G$ is connected then this Markov chain has a unique stationary distribution $\pi$ given by $\pi(x) = d_G(x)/2|E|$.

Given two probability distributions $\mu,\nu$ on $V$, we define the {\em total variation distance} between $\mu$ and $\nu$ as
\[
\|\mu-\nu\|_{\textsc{TV}} = \sup_{S \subset V} |\mu(S)-\nu(S)| = \frac{1}{2} \sum_{v \in V} |\mu(v)-\nu(v)|,
\]
where $\mu(S) = \sum_{v \in S} \mu(v)$. Now let $(X_k)_{k \geq 0}$ be a lazy simple random walk on $G$, and write $\mu_{k,x}$ for the distribution of $X_k$ when the walk is started from $x$; formally, for all $y \in G$, $\mu_{k,x}(y) = \p{X_k=y|X_0=x}$. The {\em mixing time} of the lazy simple random walk on $G$ is defined as
\[
\tau_{\textsc{mix}}(G) = \max_x\min\{k: \| \mu_{k,x} - \pi \|_{\textsc{TV}} \leq 1/4 \}.
\]
(There are many different notions of mixing time, many of which are known to be equivalent up to constant factors -- the book \cite{levin08markov}, and the survey \cite{lovasz1998mt} are both excellent references.) We may now formally state our main result.
\begin{thm}\label{thm:main}
Fix $c > 0$, let $p=c/n$, and let $H$ be an $(n,k,p)$ Newman--Watts small world. Then there is $C_0 >0$ depending only on $c$ and $k$ such that with probability at least $1-O(n^{-3})$,
\[
C_0^{-1} \log^2 n \leq \tau_{\textsc{mix}}(H) \leq C_0 \log^2 n.
\]
Furthermore, $\e[{\tau_{\textsc{mix}}(H)}] \leq C_0 (\log^2 n + 1)$.
\end{thm}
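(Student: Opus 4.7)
The matching lower bound $\Omega(\log^2 n)$ is already in \cite{durrett2007random}, and the expectation bound follows from the high-probability upper bound via the deterministic worst-case bound $\tau_{\textsc{mix}}(G) = O(n^3)$ for any connected $n$-vertex graph (the $O(n^{-3})$ bad event contributing only $O(1)$ to $\e[\tau_{\textsc{mix}}]$). The real work is therefore to prove $\tau_{\textsc{mix}}(H) \leq C_0 \log^2 n$ with probability $1 - O(n^{-3})$.

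I would proceed via a \emph{core-plus-thin-arcs} decomposition, in the spirit of Fountoulakis--Reed's mixing-time analysis of the Erd\H{o}s--R\'enyi giant component. Call a \emph{bad arc} a maximal run of consecutive ring-vertices bearing no incident shortcut, and let $A \subseteq V(H)$ be the complementary \emph{core} of shortcut-bearing vertices. Each vertex is shortcut-free with probability $\sim e^{-c}$ approximately independently, and a Chernoff / union-bound argument gives, with probability $1 - O(n^{-3})$: (a) every bad arc has length at most $L := C_1 \log n$; (b) every vertex degree is $O(\log n / \log\log n)$ and $|E(H)| = (k + c/2)n(1+o(1))$; (c) the shortcut graph satisfies a uniform expansion property at scales $\geq C \log n$. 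Property (a) is sharp and matches Durrett's lower bound, since a simple random walk in an interval of length $L$ takes $\Theta(L^2) = \Theta(\log^2 n)$ to exit.

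The upper bound then rests on two ingredients. \emph{(I) Core hit time:} from any vertex $v$, $\e_v[T_A] \leq O(L^2) = O(\log^2 n)$, by a gambler's-ruin analysis of simple random walk on an interval of length at most $L$ (with the two endpoints of $A$ playing the role of absorbing barriers). \emph{(II) Core-chain mixing:} the induced chain obtained by observing the walk only at its visits to $A$ has spectral gap $\Omega(1)$, and hence mixes in $O(\log n)$ induced-steps, each of which corresponds on average to $O(1)$ steps of the original walk. Combining (I) and (II) --- via the standard hit-plus-mix decomposition of the mixing time into (hit time of a large-$\pi$-measure set) $+$ (mixing time of the induced chain on that set) --- yields $\tau_{\textsc{mix}}(H) = O(\log^2 n)$, as desired.

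\textbf{Main obstacle.} The hard step is (II). Showing constant spectral gap for the core chain essentially reduces, via Cheeger, to the statement that every $S \subseteq A$ with $|S| \geq C\log n$ has shortcut-boundary $|\partial^{\mathrm{sc}}_H S| \gtrsim |S|$. The naive Chernoff-plus-$\binom{n}{|S|}$ union bound fails throughout the critical regime $\log n \lesssim |S| \lesssim n/\log n$, because the set-entropy $\binom{n}{|S|}$ overwhelms the Chernoff decay $e^{-\Theta(|S|)}$ on the Binomial shortcut-boundary count. I would overcome this by splitting on the number $a(S)$ of maximal ring-arcs of $S$: when $a(S) \gtrsim |S|$ the ring-boundary contribution already gives the bound, while when $a(S)$ is small the set is a union of few long arcs and is combinatorially much rarer --- with entropy only $\binom{n}{a(S)}\binom{|S|-1}{a(S)-1}$ --- so the Chernoff decay comfortably dominates. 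Property (a) is used to rule out the degenerate configurations in which all long arcs happen to fall inside shortcut-free regions of the ring.
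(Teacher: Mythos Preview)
Your approach is genuinely different from the paper's --- the paper applies the Fountoulakis--Reed connected-conductance bound directly to $H$, rather than passing to an induced core chain --- but there is a real gap in step (II). Your arc-splitting union bound does not close. With $|S|=m$ and $a=a(S)$ arcs, the entropy is of order $\binom{n}{a}\binom{m-1}{a-1}$, while the Chernoff decay for the shortcut boundary is only $e^{-\Theta(m)}$. For $m\asymp C\log n$ and $a=\varepsilon m$ with any fixed small $\varepsilon>0$, the term $\binom{n}{\varepsilon m}$ is already of order $\exp(\varepsilon m\log(n/m))=\exp(\Theta(\log^2 n))$, which swamps $e^{-\Theta(\log n)}$. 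So the ``few-arcs, low-entropy'' branch only works for $a\lesssim m/\log n$, while the ``many-arcs, ring-boundary'' branch only works for $a\gtrsim m$; the intermediate range $a\in[m/\log n,\varepsilon m]$ is uncovered. This is exactly the obstruction the paper overcomes by exploiting that Fountoulakis--Reed needs only \emph{connected} $S$: it bounds the number of connected $j$-vertex subsets of $H$ by $n(4(c+2k))^j$ via comparison with a Galton--Watson tree and Lagrange inversion, an exponential-in-$j$ count (independent of $n$ in the base) against which the $e^{-\Theta(j)}$ Chernoff decay wins at every scale $j\ge C\log n$. Your arc-entropy device does reappear in the paper, but only for linear-sized sets $|S|\ge 9n/(10R)$, where $m\asymp n$ kills the $\log(n/m)$ factor.

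A secondary issue: the hit-plus-mix decomposition is not a black box in the form you state, and ``each induced step is $O(1)$ real steps on average'' is false pointwise --- from a core vertex abutting a bad arc of length $\Theta(\log n)$, the expected excursion time before the next return to $A$ is $\Theta(\log^2 n)$. Without an additional argument controlling how often the induced chain visits such boundary vertices before mixing, the translation from $O(\log n)$ induced steps to real time yields only $O(\log^3 n)$, i.e.\ Durrett's original upper bound.
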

The expectation bound in Theorem~\ref{thm:main} follows easily from the probability bound. Indeed, given a finite reversible lazy chain $X=(X_t, t \ge 0)$ with state space $\Omega$, for $x \in \Omega$ write $\tau_x = \min\{t \ge 0:X_t=x\}$ for the hitting time of state $x$. Then $\tau_{\textsc{mix}} \le 2 \max_{x \in \Omega} \mathbb{E}_{\pi}(\tau_x)+1$, where $\mathbb{E}_{\pi}$ denotes expectation starting from stationarity (see, e.g., \cite{levin08markov}, Theorem 10.14 (ii)). If $X$ is a lazy simple random walk on a connected graph $G=(V,E)$ with $|V|=n$, then $\max_{x \in V}\mathbb{E}_{\pi}(\tau_x) \le (4/27+o(1))n^3$ (see \cite{brightwell1990maximum}).
Assuming the probability bound of Theorem~\ref{thm:main} and applying the two preceding facts we obtain that
\[
\e{[\tau_{\textsc{mix}}(H)]}\le C_0\log^2 n+\frac{8}{27}n^3(1+o(1))\cdot \p{\tau_{\textsc{mix}}(H) > C_0 \log^2 n} \le C_0(\log^2 n+1),
\]
assuming $C_0$ is chosen large enough.

The lower bound of Theorem~\ref{thm:main} is also straightforward, and we now provide its proof. For $n$ sufficiently large, given $v \in [n]$ and $\ell \in \mathbb{N}$ the probability that all vertices $w$ of $H_{n,k,p}$ with $w \in [v-\ell,v+\ell] \mod n$ have degree exactly $2k$, is greater than 
\[
\left(1-p \right)^{n (2\ell+1)} \ge e^{-2c(2\ell+1)},
\]
the inequality holding since $(1-c/n)^n > e^{-2c}$ for $n$ large. Taking $\alpha=1/(8c)$, it follows easily that with probability at least $1-O(n^{-3})$ there is $v \in [n]$ such that all vertices $w$ with $w \in [v-\alpha \log n,v+\alpha \log n] \mod n$ have degree exactly $2k$. Furthermore, the random walker started from such a vertex $v$ will with high probability take time of order $\log^2 n$ before first visiting a vertex in the complement of $[v-\alpha \log n,v+\alpha \log n] \mod n$. 
%(This may be easily shown by coupling the random walk starting from $v$ to a symmetric simple random walk on $\mathbb{Z}$.) 
Finally, under $\pi$, the set $[v-\alpha \log n,v+\alpha \log n] \mod n$ has measure tending to zero with $n$, and it thus follows from the definition of $\tau_{\textsc{mix}}$ that the mixing time is of order at least $\log^2 n$ whenever such a vertex $v$ exists. 

%Indeed, it is easily verified that a graph containing a connected set of $\Theta(\log n)$ vertices of degree $2$ has mixing time $\Omega(\log^2 n)$, which establishes the lower bound of Theorem~\ref{thm:main} in the case that $k=1$; the case $k>1$ is similarly straightforward.)
%\subsection{Plan of the paper}

Having taken care of the expectation upper bound and lower bound in probability from Theorem~\ref{thm:main}, the remainder of the paper is now devoted to proving that with probability at least $1-O(n^{-3})$, $\tau_{\textsc{mix}}(H) = O(\log^2 n)$. 
In Section~\ref{sec:mix} we explain a conductance-based mixing time bound of Fountoulakis and Reed \cite{fountoulakis2007faster} which will form the basis of our approach. 
The Fountoulakis--Reed bound requires control on the edge expansion of connected subgraphs of $H_{n,k,p}$. To this end, in Section~\ref{sec:lagrange} we bound the expected {\em number} of connected subgraphs of $H_{n,k,p}$ of size $j$, for each $1 \le j \le n$; our probability bounds  on the edge expansion of such subgraphs follow in Section~\ref{sec:expansion}. Finally, in Section~\ref{sec:proof} we finish the proof of Theorem~\ref{thm:main}. The proof is more straightforward when $c$ is large; in order to get the key ideas across cleanly we accordingly handle the large-$c$ and small-$c$ cases separately. 

\subsection{Notation}
Given a graph $G$, write $V(G)$ for the set of vertices of $G$, and $E(G)$ for the set of edges of $G$. Also, given $S \subset V(G)$, write $G[S]$ for the subgraph of $G$ induced by $S$. We say $S$ is connected if $G[S]$ is connected.
Finally, given a formal power series $F(z)$, we write $[z^j] F(z)$ to mean the coefficient of $z^j$ in $F(z)$, so if $F(z) = \sum_{k \geq 0} a_k z^k$ then $[z^j]F(z)=a_j$.

%\section{The Newman--Watts small-world}\label{nw}
%Fix integers $n \geq k \geq 1$. The {\em $(n,k)$-ring} $R_{n,k}$ is the graph with vertex set $[n]=\{1,\ldots,n\}$ and edge set
%\[
%\{\{i,j\}: i+1 \leq j \leq i+k\},
%\]
%where addition is interpreted modulo $n$. In particular, an $(n,1)$-ring is a cycle of length $n$, and whenever $n > 2k$ the $(n,k)$ ring is regular of degree $2k$.
%For $0 < p < 1$, the $(n,k,p)$ Newman--Watts small world $H_{n,k,p}$ is the random graph obtained from the $(n,k)$-ring by independently replacing each non-edge of the $(n,k)$-ring by an edge with probability $p$. We write $H$ as shorthand for $H_{n,k,p}$ whenever the parameters are clear from context.

\section{Mixing time via conductance bounds}\label{sec:mix}

A range of techniques are known for bounding mixing times (\cite{montenegro2006mathematical} is a recent survey of the available approaches), many of which are tailor-made to give sharp bounds for particular families of chains. One particularly fruitful family of techniques is based on bounding the {\em conductance} of the underlying graph, a function which encodes the presence of bottlenecks at all scales.
The precise bound we shall use is due to Fountoulakis and Reed \cite{fountoulakis2007faster}.
Given sets $S,T \subset V$, write $E(S,T)=E_G(S,T)$ for the set of edges of $G$ with one endpoint in $S$ and the other in $T$, and write $e(S,T)=|E(S,T)|$.
Also, given $S \subset V$ write $e(S) = \sum_{v \in S} d_G(v)$. The {\em conductance} of $S$, written $\Phi(S)$, is given by
\[
\Phi(S) = \frac{e(S,S^c)}{e(S)}.
\]
For $0 \leq x \leq 1/2$, write
\[
\Phi(x) = \mathop{\min_{S~\mathrm{connected}}}_{x|E| \leq e(S) \leq 2x|E|} \Phi(S);
\]
one can think of $\Phi(x)$ the (worst case) {\em connected conductance of $G$ at scale $x$}. We will use the following theorem, a specialization of the main result from \cite{fountoulakis2007faster}.
\begin{thm}[\cite{fountoulakis2007faster}]\label{thm:fr}
There exists a universal constant $C > 0$ so that for any connected graph $G$,
\[
\tau_{\textsc{mix}}(G) \leq C \sum_{i=1}^{\lceil \log_2 |E| \rceil} \Phi^{-2}(2^{-i}).
\]
\end{thm}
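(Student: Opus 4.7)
The plan is to use the evolving-set approach of Morris and Peres combined with a multi-scale analysis in the spirit of Lov\'asz and Kannan. First I would introduce an auxiliary set-valued process $(S_t)_{t\ge 0}$ on subsets of $V$, started at $S_0=\{x\}$, so that $\pi(S_t)$ is a martingale and so that the total variation distance $\|\mu_{t,x}-\pi\|_{\textsc{TV}}$ is bounded above by $\mathbb{E}[\Psi(\pi(S_t))]$ for a suitable concave potential, for instance $\Psi(p)=\sqrt{(1-p)/p}$. A direct one-step calculation using the thresholding definition of $S_{t+1}$ yields the drift inequality
\[
\mathbb{E}[\Psi(\pi(S_{t+1}))\mid S_t] \le \left(1 - \tfrac{1}{2}\Phi(S_t)^2\right)\Psi(\pi(S_t)),
\]
so that, iterating, $\mathbb{E}[\Psi(\pi(S_t))]$ decays geometrically at a rate controlled by the conductances of the sets visited along the way.

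Next I would decompose time into phases: say that a time $s$ lies in phase $i$ when $e(S_s) \in [2^{-i}|E|,\,2^{-i+1}|E|]$, for $i=1,2,\ldots,\lceil\log_2|E|\rceil$. Within phase $i$ every realization of $S_s$ sits at scale $2^{-i}$, so $\Phi(S_s)\ge \Phi(2^{-i})$ by definition, and the drift inequality implies that the expected number of steps spent in phase $i$ is $O(\Phi^{-2}(2^{-i}))$. Summing the phase lengths and upgrading the resulting expectation bound to a high-probability statement by a standard Markov-type argument yields the bound on $\tau_{\textsc{mix}}(G)$ stated in the theorem.

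The main obstacle, and the feature that distinguishes this bound from the earlier Lov\'asz--Kannan profile bound, is justifying that the infimum in the definition of $\Phi(x)$ can be restricted to \emph{connected} subsets, which strictly strengthens the conclusion. The naive drift analysis controls $\Phi(S_s)$ only by the unrestricted profile, so additional structural input is needed. One natural route is to couple the evolving set so that $S_s$ stays connected: starting from the singleton $\{x\}$, the thresholded update rule adds only vertices adjacent to $S_s$ and removes only weakly-linked interior vertices, and one tries to propagate connectivity by induction on $s$. An alternative is to exploit the pointwise inequality $\Phi(S)\ge \min_j\Phi(A_j)$ over the connected components $A_j$ of a disconnected set $S$, combined with a careful scale-tracking argument showing that the all-sets infimum at scale $2^{-i}$ is always realized (up to constants) by a connected set of comparable volume. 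Checking that either variant loses only constant factors across the $\lceil\log_2|E|\rceil$ phases --- so that the overhead is absorbable into $C$ --- is the principal technical hurdle.
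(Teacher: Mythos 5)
This is a quoted theorem: the paper cites it as a specialization of the main result of Fountoulakis and Reed~\cite{fountoulakis2007faster} and supplies no proof of its own, so there is no internal argument to compare your sketch against. Evaluating the sketch on its own terms, you have correctly located the crux --- justifying that the infimum in $\Phi(x)$ may be taken over \emph{connected} sets is precisely what distinguishes the Fountoulakis--Reed bound from the earlier all-sets profile bounds --- but the two routes you offer for closing that gap do not yet constitute a proof.

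Your route~(1), propagating connectivity of the evolving set by induction on $s$, is false as stated. Recall $S_{t+1}=\{v:P(v,S_t)\ge U_{t+1}\}$ with $U_{t+1}$ uniform on $[0,1]$. For a lazy chain, $P(v,S_t)\le 1/2$ whenever $v\notin S_t$, so on the event $\{U_{t+1}>1/2\}$ (probability $1/2$) the update is purely a deletion, $S_{t+1}\subseteq S_t$. Deleting a cut vertex of $G[S_t]$ disconnects $S_{t+1}$, and nothing in the threshold rule forbids the deleted vertex from being a cut vertex --- indeed cut vertices can easily have low $P(v,S_t)$ if many of their neighbours lie outside $S_t$. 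So the evolving set does not stay connected, and the claimed pointwise bound $\Phi(S_s)\ge\Phi(2^{-i})$ for $S_s$ at scale $2^{-i}$ does not follow.

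Route~(2) is the right instinct: the inequality $\Phi(S)\ge\min_j\Phi(A_j)$ over connected components $A_j$ of $G[S]$ is correct, since $e(S,S^c)=\sum_j e(A_j,A_j^c)$ and $e(S)=\sum_j e(A_j)$ (the $A_j$ are pairwise non-adjacent), and a ratio of sums is at least the minimum of the ratios. But the minimizing component $A_{j^*}$ need not live at scale $2^{-i}$: if $S$ has volume about $2^{-i}|E|$ and splinters into many small pieces, $e(A_{j^*})$ can be several dyadic scales lower, so the bound you obtain is $\Phi(S_s)\ge\Phi(2^{-i'})$ for some $i'>i$ that you do not control. Bookkeeping which scales get charged and how many times, and showing the total charge is still $O\bigl(\sum_i\Phi^{-2}(2^{-i})\bigr)$, is exactly the content of the Fountoulakis--Reed argument and cannot be waved away as a constant-factor overhead; a naive accounting can charge the same bad scale $\Theta(\log|E|)$ times. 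As a bibliographic note, Fountoulakis and Reed's own proof does not go through evolving sets at all but is a refinement of the Lov\'asz--Kannan potential-function argument, in which the connectedness enters through a careful decomposition of the level sets of the excess measure $P^t(x_0,\cdot)-\pi(\cdot)$; the Morris--Peres evolving-set machinery is a plausible alternative scaffold, but the scale-charging lemma you would need to insert is the same hard step in either framework.
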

With this theorem at hand, proving mixing time bounds boils down to understanding what sorts of bottlenecks can exist in $G$. For the $(n,k,p)$ Newman--Watts small world with $p=c/n$, it is not hard to see that small sets can have poor conductance. Indeed, in the introduction we observed that with high probability the ring $R_{n,k}$ will contain connected sets $S$ with $\Theta(\log n)$ nodes, to which no edges are added in $H_{n,k,p}$. Such a set $S$ will have $e(S,S^c) <k^2$ and so will have conductance $\Phi(S) = O(1/\log n)$. It follows that in this case the best possible mixing time bound one can hope to prove using Theorem~\ref{thm:fr} is of order $\log^{2} n$. 

We will prove the upper bound in the probability bound of Theorem~\ref{thm:main} by showing that there are constants $\epsilon > 0$, $C_0> 0$ such that with high probability, whenever $|S| \geq C_0 \log n$, we have $\Phi(S) \geq \epsilon$. To accomplish this using Theorem~\ref{thm:fr}, we will need control on the likely number of connected subgraphs of $H_{n,k,p}$ of size $s$, for all $s \ge \log n$. In the next section, we bound the expected number of such subgraphs using Lagrange inversion and comparison with a branching process.

\section{Counting connected subgraphs}\label{sec:lagrange}

Fix $c > 0$ and $k \geq 1$, let $p=c/n$, and let $H=H_{n,k,p}$ be a Newman--Watts small world. Let $v\in [n]$, write $B_{j,v}=B_{j,v}(H)$ for the set of all $S \subset [n]$ containing $v$ with $|S|=j$ such that $H[S]$ is connected, and let $B_j=\bigcup\limits_{v\in [n]}B_{j,v}$. Our aim in this section is to establish the following proposition.

\begin{prop}\label{prop:lagrange1}

For any positive integer $j$ and any $v \in [n]$, $\e|B_{j,v}| \leq (4(c+2k))^j$, and $\e|B_j|\leq n(4(c+2k))^j$.

\end{prop}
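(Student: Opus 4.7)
The strategy is to bound $|B_{j,v}|$ by the count $N_{j,v}$ of labeled subgraphs of $H$ that are rooted trees on $j$ vertices with root $v$, since this latter quantity is much easier to handle in expectation. Every $S \in B_{j,v}$ admits at least one spanning tree of $H[S]$, which may be rooted at $v$; choosing one such rooted spanning tree per $S$ yields an injection, so $|B_{j,v}| \le N_{j,v}$. Writing $q(T)$ for the number of non-ring edges of a tree $T$, and noting that ring edges of $H$ are present deterministically while the remaining edges appear independently with probability $p=c/n$,
\[
\mathbb{E}[N_{j,v}] \;=\; \sum_T \mathbb{P}(T\subset H) \;=\; \sum_T p^{q(T)},
\]
the sum running over labeled rooted trees $T$ on $j$ distinct vertices of $[n]$ with root $v$.

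To evaluate this sum I would overcount by allowing the children of each vertex to be plane-ordered and by dropping the distinct-labels requirement. There are $C_{j-1}\le 4^{j-1}$ plane-tree shapes on $j$ vertices. Growing a labeling in preorder, whenever we add an edge from an already-labeled parent with label $u\in[n]$ to a new child, the weighted sum of admissible child-labels is at most
\[
2k\cdot 1 \;+\; (n-1-2k)\cdot \tfrac{c}{n} \;\le\; 2k + c,
\]
since either the child's label lies in the $2k$ ring-neighbors of $u$ (deterministic edge, weight $1$) or it does not (random edge, weight $p$). Multiplying over the $j-1$ edges of each plane-tree shape gives
\[
\mathbb{E}[N_{j,v}] \;\le\; C_{j-1}(2k+c)^{j-1} \;\le\; 4^{j-1}(c+2k)^{j-1} \;\le\; (4(c+2k))^j,
\]
as required. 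Equivalently — and this is where Lagrange inversion enters — the generating function $F(z)$ for plane trees with per-edge weight $\omega=2k+c$ satisfies $F(z)=z\cdot\phi(F(z))$ with $\phi(x)=1/(1-\omega x)$, and Lagrange inversion yields
\[
[z^j]F(z) \;=\; \frac{1}{j}[x^{j-1}]\phi(x)^j \;=\; \frac{1}{j}\binom{2j-2}{j-1}\omega^{j-1} \;=\; C_{j-1}\omega^{j-1},
\]
recovering the same estimate. This is the promised branching-process comparison: $F$ is the generating function of the total progeny of a Galton--Watson tree with geometric offspring distribution of mean $\omega$.

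For the bound on $|B_j|$, each connected $j$-subset $S$ is counted once in $B_{j,v}$ for every $v\in S$, so $|B_j|\le \sum_{v\in[n]}|B_{j,v}|$ and therefore $\mathbb{E}|B_j|\le n(4(c+2k))^j$. The delicate step in the argument is setting the plane-tree overcount up so that the per-edge factor comes out exactly $2k+c$ rather than something larger like $2k+np=2k+c$ plus a stray term from forbidding previously-used labels; once one is willing to pay the (benign) cost of ignoring that constraint, the Catalan bound $C_{j-1}\le 4^{j-1}$ supplies the factor $4^j$ essentially for free.
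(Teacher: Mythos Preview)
Your proof is correct and lands on exactly the same bound as the paper, $\frac{1}{j}\binom{2j-2}{j-1}(c+2k)^{j-1}$, but the route is more direct. The paper first dominates $|B_{j,v}|$ by the number $\mu_j$ of $j$-vertex rooted subtrees in a Galton--Watson tree with offspring law $\mathrm{Bin}(n-2k-1,p)+2k$, proves a generating-function identity $F(z)=zQ(F(z))$ with $Q$ the factorial-moment generating function of the offspring law, applies Lagrange inversion to get $\mu_j=\frac{1}{j}[z^{j-1}]Q(z)^j$, and finally bounds the factorial moments by $q_j\le(c+2k)^j$. You instead bound $|B_{j,v}|$ by rooted labeled spanning subtrees of $H$, overcount these by plane-ordered trees with possibly repeated labels, and note that the weighted count factors as $C_{j-1}\cdot(c+2k)^{j-1}$. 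This is the paper's argument unwound: the plane ordering is precisely what the factorial moment $q_j$ encodes, and your Catalan factor is the paper's $\frac{1}{j}\binom{2j-2}{j-1}$ coming out of Lagrange inversion. The paper's framework buys generality (any offspring law with exponentially bounded factorial moments); your version buys self-containment and avoids the generating-function machinery. One small quibble: the closing remark that $F$ is ``the generating function of the total progeny of a Galton--Watson tree with geometric offspring distribution of mean $\omega$'' is not accurate---$F$ here records expected subtree counts, not a probability distribution---but this side comment plays no role in the proof.
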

We will prove Proposition~\ref{prop:lagrange1} by comparison with a Galton--Watson process.
Recall that a Galton--Watson process can be described as follows. An initial individual -- the progenitor -- has a random number $Z_1$ of children, where $Z_1$ is some non-negative, integer-valued random variable. The distribution of $Z_1$ is called the {\em offspring distribution}.
%The Galton-Watson process is described as the following: in a population, we begin with one person -- the root. The root has $Z_1$ children, where $Z_1$ follows a fixed distribution (called \emph{offspring distribution}).
Each child of the progenitor reproduces independently according to the offspring distribution, and this process continues recursively.
%$Z_i$ denotes the number of population in $i$-th generation (the root being 0-th generation). If we view each person as a vertex and draw an edge between each pair of parent and child, then this process can be visualized as a tree, the \emph{Galton-Watson tree}.
The family tree of a Galton--Watson process is called a Galton--Watson tree, and is rooted at the progenitor.

The number of neighbours of a vertex in $H_{n,k,p}$ is distributed as $\mathrm{Bin}(n-2k-1,p)+2k$. From this it is easily seen that $|B_{j,v}|$ is stochastically dominated by the number of subtrees of size $j$ containing the root in a Galton--Watson tree with offspring distribution $\mathrm{Bin}(n-2k-1,p)+2k$. To bound the expectations of the latter random variables, we will first encode these expectations as the coefficients of a generating function, then use the Lagrange inversion formula (\cite{stanley1999enumerative}, Theorem 5.4.2), which we now recall. This approach was suggested to us by Omer Angel in a mathoverflow comment (\url{http://mathoverflow.net/questions/66595/}); we thank him for the suggestion.

\begin{thm}[Lagrange inversion formula]\label{thm:lagrangeinversionformula}
If $G(x)$ is a formal power series and $f(x)=xG(f(x))$, then
$$n[x^n]f(x)^k=k[x^{n-k}]G(x)^n.$$
\end{thm}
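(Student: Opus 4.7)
The plan is to prove the Lagrange inversion formula via formal residue calculus. For a formal Laurent series $h(x)=\sum_n a_n x^n$ with finitely many negative-degree terms, write $\mathrm{Res}(h)=[x^{-1}]h$, so in particular $[x^n]h=\mathrm{Res}(h/x^{n+1})$. Two elementary lemmas do the heavy lifting. First, $\mathrm{Res}(h'(x))=0$ for every such $h$, since the $x^{-1}$-coefficient of $\sum n a_n x^{n-1}$ comes from the index $n=0$. Second (change of variables): if $f$ is a formal power series with $f(0)=0$ and $f'(0)\neq 0$, then
$$\mathrm{Res}_x\bigl(H(f(x))\,f'(x)\bigr)=\mathrm{Res}_y\bigl(H(y)\bigr)$$
for every formal Laurent series $H$. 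I would prove the second lemma by linearity, reducing to $H(y)=y^m$ with $m\in\mathbb{Z}$: when $m\neq -1$, $H(f)f'$ is the derivative of $f^{m+1}/(m+1)$, so both sides vanish by the first lemma; when $m=-1$, a direct computation using $f(x)=c_1 x+O(x^2)$ with $c_1\neq 0$ gives $f'/f=1/x+(\text{power series in }x)$, matching $\mathrm{Res}(1/y)=1$.

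Given these lemmas, the theorem follows from a short computation. Assuming $G(0)\neq 0$ (the degenerate case $G(0)=0$ forces $f\equiv 0$ and both sides of the claimed identity vanish), the functional equation yields $f(x)=G(0)x+O(x^2)$, so $f$ has valuation exactly one, and moreover $1/x=G(f(x))/f(x)$. Then
\begin{align*}
n[x^n]f(x)^k
&=[x^{n-1}]\bigl(f^k\bigr)'(x)=k\,\mathrm{Res}_x\!\left(\frac{f(x)^{k-1}f'(x)}{x^n}\right)\\
&=k\,\mathrm{Res}_x\!\left(f(x)^{k-n-1}\,G(f(x))^n\,f'(x)\right),
\end{align*}
where the last line uses $1/x^n=G(f)^n/f^n$. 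Applying the change-of-variables lemma with $H(y)=y^{k-n-1}G(y)^n$ converts this to $k\,\mathrm{Res}_y\bigl(y^{k-n-1}G(y)^n\bigr)=k[y^{n-k}]G(y)^n$, as required.

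The main point requiring care is the formal-power-series bookkeeping: one must work in the ring of formal Laurent series in $x$ so that $f^{k-n-1}$ is a legitimate object (which relies on $f$ having valuation exactly one), verify that the compositions $G(f(x))$ and $H(f(x))$ land in this ring (using $f(0)=0$), and check that $H(y)=y^{k-n-1}G(y)^n$ has only finitely many negative-degree terms so that the change-of-variables lemma applies. These are all routine consequences of the valuation conditions on $f$, so the substantive content of the argument is concentrated in the change-of-variables lemma.
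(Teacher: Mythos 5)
Your argument is correct, but note that the paper does not prove this statement at all: it is quoted as a known result (Stanley, \emph{Enumerative Combinatorics} Vol.~2, Theorem 5.4.2) and used as a black box, so there is no in-paper proof to compare against. What you have written is the standard formal-residue proof of Lagrange inversion, and the details check out: the two lemmas (vanishing residue of a derivative, and the change-of-variables identity $\mathrm{Res}_x(H(f)f')=\mathrm{Res}_y(H(y))$ reduced by linearity to $H(y)=y^m$) are correctly proved, the reduction is legitimate because $f$ has valuation exactly $1$ (so only the finitely many negative powers of $H$ can contribute to the $x^{-1}$ coefficient of $H(f)f'$, which you flag as part of the bookkeeping), and the main chain $n[x^n]f^k=[x^{n-1}](f^k)'=k\,\mathrm{Res}(f^{k-1}f'/x^n)=k\,\mathrm{Res}(f^{k-n-1}G(f)^nf')=k[y^{n-k}]G(y)^n$ uses the functional equation only through $1/x=G(f)/f$, which is valid since $G(0)\neq0$ makes $G(f)$ invertible; the degenerate case $G(0)=0$ is also handled correctly, since then $f\equiv0$ and $G^n$ has valuation at least $n$, so both sides vanish for $k\geq1$. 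If you wanted a proof that resonates more with how the paper actually uses the formula (counting rooted subtrees via Lemmas~\ref{lem:lagrange1} and~\ref{lem:lagrange2}), there is also a purely combinatorial route via plane trees or the cycle lemma, but your residue argument is shorter, fully rigorous as written, and entirely adequate.
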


Fix a non-negative, integer-valued random variable $B$, and for $m \geq 0$ write $p_m=\p{B=m}$. Given a Galton-Watson tree $\cT$ with offspring distribution $B$, let $\mu_j=\mu_j(B) $ denote the expected number of subtrees of $\cT$ containing the root of $\cT$ and having exactly $j$ vertices (so $\mu_0=0$). Also, write
\[
q_j =\sum_{m \geq j} p_m (m)_j,
\]
where $(m)_j = m!/(m-j)!$ is the falling factorial. Note that $q_j$ is the expected number of ways to choose and order precisely $j$ children of the root in $\cT$.
Let $F(z)=\sum_{j=0}^{\infty}\mu_j z^j$ and $Q(z)=\sum_{j=0}^{\infty}q_j z^j$ be the generating functions of $\mu_j$ and $q_j$ respectively, viewed as formal power series.

\begin{lem}\label{lem:lagrange1}
$F(z)=zQ(F(z))$
\end{lem}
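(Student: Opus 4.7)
The plan is to establish $F(z)=zQ(F(z))$ by a first-step recursion at the root of the Galton--Watson tree $\cT$, then to recognize the recursion as a generating-function identity.

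First I would condition on the root's offspring count and decompose a subtree of $\cT$ of size $j$ containing the root as follows: the root contributes one vertex; the subtree picks out an ordered tuple of $i$ distinct children of the root (for some $0\le i\le m$, where $m$ is the offspring count), contributing the weight $(m)_i$; and for each chosen child $c_l$ it specifies a subtree of size $j_l$ of the GW subtree $\cT_{c_l}$ containing $c_l$, with $j_1+\cdots+j_i=j-1$. The use of the falling factorial $(m)_i$ rather than $\binom{m}{i}$ is precisely what produces the factorial-moment series $Q$, and matches the paper's remark that $q_i$ counts ordered selections of $i$ children.

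Next, since $\cT_{c_1},\cT_{c_2},\ldots$ are i.i.d.\ copies of $\cT$, linearity of expectation together with independence gives
\[
\mu_j \;=\; \sum_{m\ge 0}p_m\sum_{i=0}^{m}(m)_i\sum_{\substack{j_1+\cdots+j_i=j-1\\ j_l\ge 1}}\prod_{l=1}^i\mu_{j_l}.
\]
Because $\mu_0=0$, the innermost sum equals $[z^{j-1}]F(z)^i$. Swapping the outer summations and using $\sum_{m\ge i}p_m(m)_i=q_i$ collapses this to $\mu_j=[z^{j-1}]Q(F(z))$. Multiplying by $z^j$ and summing over $j\ge 1$ (valid because $\mu_0=0$) then yields $F(z)=zQ(F(z))$, as required.

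The main subtlety, such as it is, is bookkeeping: one must be careful to record the tuple of selected children of the root as ordered rather than unordered, since it is $(m)_i$ and not $\binom{m}{i}$ that matches the definition $q_i=\e (B)_i$ appearing in $Q$. Once that point is in place, the lemma falls out of a routine generating-function manipulation.
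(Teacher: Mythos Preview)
Your argument mirrors the paper's: both rest on a first-step decomposition at the root of $\cT$. Where you write down a coefficient recursion for $\mu_j$ and then pass to generating functions, the paper expands $Q(F(z))$ and reads off the very same recursion in the other direction; the content is identical, and the paper even remarks that the combinatorial interpretation can be replaced by a purely formal check.

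That said, your key step does not go through as written, and the paper's informal justification shares the same defect. A subtree of $\cT$ containing the root, as defined (a connected subgraph on $j$ vertices), selects an \emph{unordered} subset of the root's children, not an ordered tuple; the honest weight when the root has $m$ children is $\binom{m}{i}$, not $(m)_i$. Your closing remark that one ``must record the tuple as ordered, since it is $(m)_i$ that matches $q_i$'' reasons backward from the desired conclusion and does not explain why each subtree should be counted $i!$ times. Concretely, for $B\equiv 2$ one has $\mu_3=5$ (root plus its two children, or root plus one child plus one grandchild), whereas your recursion---equivalently, the identity $F=zQ(F)$---gives $6$. What the recursion with $(m)_i$ actually computes is the expected number of subtrees \emph{equipped with} a linear order on the children at each vertex (equivalently, root-preserving embeddings of rooted plane trees into $\cT$); this quantity dominates $\mu_j$, so Lemma~\ref{lem:lagrange2} and Proposition~\ref{prop:lagrange1} survive as upper bounds. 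To make the present lemma literally correct, either redefine $\mu_j$ to count such ordered subtrees, or replace $Q$ by $\widetilde Q(z)=\sum_{j\ge 0}(q_j/j!)\,z^j=\e\big[(1+z)^{B}\big]$, for which $F(z)=z\,\widetilde Q(F(z))$ does hold for the original $\mu_j$.
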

\begin{proof}
We have
\begin{align}
    Q(F(z))=& \sum\limits_{j\ge 0}q_j\left(\sum\limits_{r\ge 1}\mu_rz^r\right)^j \nonumber \\
       		=& \sum\limits_{j\ge 0} q_j\left(\sum\limits_{r\ge j}\sum\limits_{\genfrac{}{}{0pt}{}{r_1+\cdots+r_j=r}{r_1, \cdots, r_j \in \mathbb{N}^+}}z^r \mu_{r_1}\cdots\mu_{r_j}\right) \nonumber \\
       		=& \frac{1}{z} \sum\limits_{r\ge 0}z^{r+1}\sum\limits_{j\le r}q_j\left(\sum\limits_{\genfrac{}{}{0pt}{}{r_1+\cdots+r_j=r}{r_1, \cdots, r_j \in \mathbb{N}^{+}}} \mu_{r_1}\cdots\mu_{r_j}\right) \label{onestep}
\end{align}
The $r$'th term in the outer sum in (\ref{onestep}) encodes subtrees of $\cT$ with $r+1$ vertices that contain the root, as follows. First specify the degree $j$ of the root of the tree $T$ to be embedded. Then choose which $j$ children of the root of $\cT$ will form part of the embedding, and the order in which the children of the root of $T$ will be mapped to these nodes (there are $q_j$ ways to do this on average). Next, choose the sizes $r_1,\ldots,r_j$ of the subtrees of the children of the root in the embedded tree; finally, embed each such subtree in the respective subtree of $\cT$; on average, there are $\mu_{r_i}$ ways to do this.
It follows that
\begin{align}
(\ref{onestep})    		=& \frac{1}{z} \sum\limits_{r\ge 0}z^{r+1}\mu_{r+1} \label{twostep} \\
       					=& \frac{1}{z}F(z),	\nonumber
\end{align}
which proves the lemma. (We remark that verifying that (\ref{onestep}) and (\ref{twostep}) are equal can be done purely formally; however, we find the preceding explanation more instructive.) \end{proof}

\begin{lem}\label{lem:lagrange2}
Fix $C > 0$. if $q_j \leq C^j$ for all $j \geq 0$ then $\mu_j \leq \frac{1}{j} {2j-2 \choose j-1} C^{j-1} < (4C)^{j-1}$ for all $j \geq 1$.	
\end{lem}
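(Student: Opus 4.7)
The plan is to apply the Lagrange inversion formula (Theorem~\ref{thm:lagrangeinversionformula}) directly to the identity $F(z)=zQ(F(z))$ from Lemma~\ref{lem:lagrange1}. Taking $G=Q$ and $k=1$ in the Lagrange inversion formula yields
\[
j\,[z^j]F(z) \;=\; [z^{j-1}]Q(z)^j,
\]
so $j\mu_j = [z^{j-1}]Q(z)^j$. The problem therefore reduces to extracting a coefficient of a power of the known series $Q$.

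Next I would expand $Q(z)^j$ as a multinomial. Writing $Q(z)=\sum_{i\ge 0}q_i z^i$, the coefficient of $z^{j-1}$ in $Q(z)^j$ is
\[
[z^{j-1}]Q(z)^j \;=\; \sum_{\substack{i_1+\cdots+i_j=j-1\\ i_1,\ldots,i_j\ge 0}} q_{i_1}q_{i_2}\cdots q_{i_j}.
\]
Using the hypothesis $q_i\le C^i$, each term in this sum is bounded above by $C^{i_1+\cdots+i_j}=C^{j-1}$. The number of weak compositions of $j-1$ into $j$ parts is the standard stars-and-bars count $\binom{(j-1)+(j-1)}{j-1}=\binom{2j-2}{j-1}$, giving
\[
[z^{j-1}]Q(z)^j \;\le\; \binom{2j-2}{j-1}C^{j-1}.
\]
Dividing by $j$ produces the first stated bound $\mu_j\le \tfrac{1}{j}\binom{2j-2}{j-1}C^{j-1}$.

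Finally, the quantity $\tfrac{1}{j}\binom{2j-2}{j-1}$ is precisely the $(j-1)$st Catalan number, for which the textbook estimate $\tfrac{1}{n+1}\binom{2n}{n}\le 4^n$ (immediate from $\binom{2n}{n}\le 2^{2n}$) yields $\tfrac{1}{j}\binom{2j-2}{j-1}\le 4^{j-1}$. Combining this with the previous display gives $\mu_j<(4C)^{j-1}$, completing the proof. There is no real obstacle here: once Lemma~\ref{lem:lagrange1} is in hand, Lagrange inversion transforms what would otherwise be a recursive combinatorial count into a one-line coefficient extraction, and the remaining steps are just the multinomial expansion and a Catalan number estimate.
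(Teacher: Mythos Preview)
Your proof is correct and follows essentially the same route as the paper: apply Lagrange inversion to the identity $F(z)=zQ(F(z))$ with $k=1$, expand $Q(z)^j$ as a multinomial, bound each summand by $C^{j-1}$, and count the $\binom{2j-2}{j-1}$ weak compositions. Your added remark identifying $\frac{1}{j}\binom{2j-2}{j-1}$ as a Catalan number and invoking $\binom{2n}{n}\le 4^n$ makes the final inequality explicit where the paper simply asserts it.
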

\begin{proof}
By Lemma~\ref{lem:lagrange1} and Theorem \ref{thm:lagrangeinversionformula}, we have $j[z^j]F(z)^k=k[z^{j-k}]Q(z)^j$. In particular, taking  $k=1$, we have $\mu_j=[z^j]F(z)=\frac{1}{j}[z^{j-1}]Q(z)^j$. Now,
\[
(Q(z))^j=\pran{\sum\limits_{l\ge 0}q_l z^l}^j=\sum\limits_{r\ge 0}\left(\sum\limits_{\genfrac{}{}{0pt}{}{l_1+\cdots+l_j=r}{l_1, \cdots, l_j\in\mathbb{N}}}q_{l_1}q_{l_2}\cdots q_{l_j}\right)z^r.
\]
Therefore $[z^{j-1}]Q(z)^j=\sum\limits_{\genfrac{}{}{0pt}{}{l_1+\cdots+l_j=j-1}{l_1, \cdots, l_j\in\mathbb{N}}}q_{l_1}q_{l_2}\cdots q_{l_j}$. Each summand $q_{l_1}\cdots q_{l_j}$ is at most ${C}^{j-1}$ by assumption. There are ${2j-2 \choose j-1}$ nonnegative integer solutions to the equation $l_1+\cdots+l_j=j-1$, so we obtain that $[z^{j-1}] Q(z)^j \leq {2j-2 \choose j-1} \cdot C^{j-1}$. The result follows.
\end{proof}
The next lemma controls the growth of $q_j$ for some important special offspring distributions, which allows us to use Lemma~\ref{lem:lagrange2} to prove Proposition~\ref{prop:lagrange1}.
\begin{lem}\label{lem:lagrange3}
If $B$ is Poisson$(c)$ distributed then $q_j=c^j$ for all $j$. Also, if $B$ is $\mathrm{Bin}(n,c/n)$ distributed, then for all $j \geq 0$, $q_j \leq c^j$.
Finally, if $B-\ell$ is $\mathrm{Bin}(n,c/n)$ distributed for some fixed $\ell \geq 0$, then for all $j \geq 0$, $q_j \leq (c+\ell)^j$.
\end{lem}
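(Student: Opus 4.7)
The plan is to recognize the quantity $q_j = \sum_{m \ge j} p_m (m)_j$ as the $j$-th factorial moment $\e[(B)_j]$ of the offspring distribution, after which each of the three assertions becomes a routine moment computation. With this reformulation in hand I would dispatch the three parts in order, using the first two cases to bootstrap the third.

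For $B \sim \mathrm{Poisson}(c)$, a direct rearrangement gives
$$
\e[(B)_j] = \sum_{m \ge j} \frac{m!}{(m-j)!}\cdot \frac{c^m e^{-c}}{m!} = c^j \sum_{\ell \ge 0} \frac{c^{\ell} e^{-c}}{\ell!} = c^j,
$$
which is the stated equality. For $B \sim \mathrm{Bin}(n, c/n)$, I would use the identity $(m)_j \binom{n}{m} = (n)_j \binom{n-j}{m-j}$ to pull $(n)_j$ out of the sum; shifting the summation index $m \mapsto m-j$ and invoking the binomial theorem shows that the remainder collapses to $1$, leaving $q_j = (n)_j (c/n)^j \le n^j (c/n)^j = c^j$.

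For the shifted case, where $B' := B - \ell \sim \mathrm{Bin}(n, c/n)$, I would apply the Chu--Vandermonde identity for falling factorials,
$$
(x+y)_j = \sum_{i=0}^{j} \binom{j}{i} (x)_i (y)_{j-i},
$$
pointwise with $x = B'$ and $y = \ell$. Taking expectations, using the binomial case to bound $\e[(B')_i] \le c^i$, and noting $(\ell)_{j-i} \le \ell^{j-i}$, the binomial theorem gives
$$
q_j = \e[(B'+\ell)_j] \le \sum_{i=0}^{j} \binom{j}{i} c^i \ell^{j-i} = (c+\ell)^j,
$$
as required.

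The statements are essentially textbook identities, and the only mild obstacle I anticipate is remembering the falling-factorial form of Chu--Vandermonde so as to reduce the shifted case to the unshifted one without having to re-expand $(B'+\ell)_j$ by brute force.
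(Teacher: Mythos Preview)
Your proposal is correct and matches the paper's proof almost exactly: the Poisson and binomial computations are identical, and for the shifted case both you and the paper arrive at the decomposition $q_j = \sum_{i} \binom{j}{i} (\ell)_{j-i}\, \e[(B')_i]$, which is then bounded termwise and summed via the binomial theorem. The only cosmetic difference is that you invoke the falling-factorial Chu--Vandermonde identity directly, whereas the paper derives the same identity combinatorially by interpreting $q_j$ as the expected number of ways to choose and order $j$ children drawn independently from the deterministic block of size $\ell$ and the $\mathrm{Bin}(n,c/n)$ block.
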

\begin{proof}
%First note that
%\[
%q_j=\sum_{m \ge j} p_m (m)_j \le \sum\limits_{m\ge j}p_m m^j=\mathbf{E}[B^j\mathbf{I}_{B \geq j}].
%\]
If $B$ is Poisson with mean $c$, then
\[
  q_j
   =\sum_{m \geq j} p_m \frac{m!}{(m-j)!}
   = e^{-c}c^j\left(1+\frac{c}{1!}+\frac{c^2}{2!}+\cdots\right)
   = e^{-c}c^je^c
   = c^j
\]
If $B\eqdist \mathrm{Bin}(n,c/n)$ then
\begin{align*}
q_j 		& = \sum_{j \leq m \leq n} {n \choose m} \pran{\frac{c}{n}}^m \pran{1-\frac{c}{n}}^{n-m} \frac{m!}{(m-j)!} \\
		& = \pran{\frac{c}{n}}^j \cdot n(n-1) \cdots (n-j+1) \sum_{j \leq m \leq n} {n-j \choose n-m}  \pran{\frac{c}{n}}^{m-j} \pran{1-\frac{c}{n}}^{n-m} \\
		& \leq c^j,
\end{align*}
Finally, if $B \eqdist \mathrm{Bin}(n,c/n)+\ell$ then we consider three Galton-Watson trees $\cT_1, \cT_2, \cT_3$ with offspring distributions $B_1\equiv l, B_2\eqdist\mathrm{Bin}(n,c/n)$ and $B_3 \eqdist \mathrm{Bin}(n,c/n)+\ell$ respectively. For $i=1,2,3$ write $q_j^{(i)}=q_j(B_i)$. Since $q_j^{(1)}$ is the expected number of ways to choose and order precisely $j$ children of the root in $\cT_1$, we have $q_j^{(1)}=(l)_j\le l^j$ and by the previous argument we know that $q_j^{(2)}\le c^j$. Finally, since $q_j^{(3)}$ is the expected number of ways to choose and order precisely $j$ children of the root in $\cT_3$, by independence we have
\begin{align*}
q_j^{(3)} & = \sum\limits_{s=0}^j{j \choose s}q_s^{(1)}q_{j-s}^{(2)} \\
& \le \sum\limits_{s=0}^j{j \choose s}l^sc^{j-s} \\
& = (l+c)^j.
\end{align*}

The factor ${j \choose s}$ in the first equation is because as long as we choose $s$ positions for children coming from the deterministic component of offspring distribution, the order of all $j$ children are fixed since the order among $s$ children and the order among the other $j-s$ are both fixed.
\end{proof}
We remark that if $\cT$ has deterministic $d$-ary branching (every node has exactly $d$ children with probability one), then for all $j$, the number of subtrees containing the root and having precisely $j$ nodes is exactly ${dj \choose j-1}/j$ (Thm 5.3.10 in \cite{stanley1999enumerative}) which is bounded above by $\left(\frac{edj}{j-1}\right)^{j-1}/j\le (ed)^j$. Thus, Lemma~\ref{lem:lagrange2} shows that when factorial moments grow only exponentially quickly, the values $\mu_j$ behave roughly as in the case of deterministic branching. We also note that when $\cT$ has Poisson$(c)$ branching distribution, Lemma~\ref{lem:lagrange2} and the argument of Lemma~\ref{lem:lagrange3} together yield the exact formula $\mu_j = \frac{c^{j-1}}{j} {2j-2 \choose j-1}$.
\begin{proof}[Proof of Proposition~\ref{prop:lagrange1}]
For any $v \in V(H_{n,k,p})$, the random variable $|B_{j,v}|$ is stochastically dominated by $\mu_j(B)$, where $B \eqdist \mathrm{Bin}(n,c/n)+2k$.
By Lemma~\ref{lem:lagrange3}, we have that $q_j(B) \le (c+2k)^j$ for all $j$. It then follows from Lemma~\ref{lem:lagrange2} that $q_j \le (4(c+2k))^{j-1}$ for all $j$, proving the proposition.
\end{proof}

%\begin{lem}\label{lem:lagrange}
% Then the following hold:
%\begin{itemize}
%\item[(i)] $F(z)=zQ(F(z))$;
%\item[(ii)] If $B$ is Poisson$(c)$ or $\mathrm{Bin}(n,c/n)$ distributed, then for all $j \geq 0$, $q_j \leq c^j$.
%Also, if $B-\ell$ is $\mathrm{Bin}(n,c/n)$ distributed for some fixed $\ell \geq 0$, then for all $j \geq 0$, $q_j \leq (c+\ell)^j$.
%\item[(iii)] Fix $C > 0$. if $q_j \leq C^j$ for all $j \geq 0$ then $\mu_j \leq \frac{1}{j} {2j-2 \choose j-1} C^{j-1} < (4C)^{j-1}$ for all $j \geq 1$.
%\end{itemize}
%\end{lem}

\section{Bounding the expansion of connected subgraphs of $H_{n,k,p}$}\label{sec:expansion}

Recall from the preceding section that $B_j$ is the collection of connected subsets $S$ of $V(H_{n,k,p})$ with $|S|=j$. We will show that with high probability, for all $j \ge \log n$, all elements $S$ of $B_j$ have conductance uniformly bounded away from zero. Many of our proofs are easier when $c$ is large, and we treat this case first.

%\begin{itemize}
%\item[1.] Galton-Watson trees.
%
%
%\item[2.] The Lagrange inversion formula.
%
%%We will use the following theorem known as the Lagrange inversion formula (\cite{stanley1999enumerative}, Theorem 5.4.2):
%%\begin{thm}\label{thm:lagrangeinversionformula}
%%If $G(x)$ is a formal power series and $f(x)=xG(f(x))$, then
%%$$n[x^n]f(x)^k=k[x^{n-k}]G(x)^n.$$
%%\end{thm}
%
%\item[3.] Chernoff bounds.

In the course of the proofs we will make regular use of the standard Chernoff bounds (see, e.g., \cite{janson00random} Theorem 2.1) which we summarize here:

\begin{thm}\label{thm:chernoff}
If $X \eqdist \mathrm{Bin}(m,q)$ then

for all $0 < x < 1$, $\p{X \leq (1-x)mq} \le \exp(-mq\phi(-x)) \le \exp(-mqx^2/2)$,

and for all $x > 0$,
$\p{X \geq (1+x)mq} \le \exp(-mq\phi(x)) \le  \exp(-mqx^2/2(1+x))$,

where $\phi(x)=(1+x)\log(1+x)-x$.
\end{thm}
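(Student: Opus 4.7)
The plan is to prove Theorem~\ref{thm:chernoff} by the classical exponential moment (Bernstein--Chernoff) method, and then extract the cleaner polynomial bounds from the sharp rate function $\phi(x)=(1+x)\log(1+x)-x$. This is a textbook result, so the steps are fairly mechanical; I state them mainly to be explicit about the order and the small technical point at the end.

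First I would decompose $X\eqdist\mathrm{Bin}(m,q)$ as a sum $X=\sum_{i=1}^m X_i$ of i.i.d.\ Bernoulli$(q)$ random variables and compute the per-coordinate moment generating function
\[
\e[e^{\lambda X_i}] = 1 - q + q e^{\lambda} \le \exp\!\bigl(q(e^{\lambda}-1)\bigr),
\]
using the inequality $1+y\le e^y$. Independence then gives $\e[e^{\lambda X}]\le \exp(mq(e^{\lambda}-1))$ for every $\lambda\in\mathbb{R}$.

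For the upper tail I would apply Markov's inequality to $e^{\lambda X}$ with $\lambda>0$, obtaining
\[
\p{X \ge (1+x)mq} \le \exp\!\bigl(-\lambda(1+x)mq + mq(e^{\lambda}-1)\bigr),
\]
and optimize in $\lambda$; the minimum is attained at $\lambda=\log(1+x)$ and yields the bound $\exp(-mq\phi(x))$. The lower tail is symmetric: I would run the same argument with $\lambda<0$ (or equivalently apply the upper-tail argument to $m-X\eqdist\mathrm{Bin}(m,1-q)$ after relabelling), optimize at $\lambda=\log(1-x)$, and conclude $\p{X\le (1-x)mq}\le \exp(-mq\phi(-x))$.

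Finally, to pass from the exact rate function $\phi$ to the polynomial bounds stated in Theorem~\ref{thm:chernoff} I would verify the two elementary calculus inequalities $\phi(x)\ge x^2/(2(1+x))$ for $x\ge 0$ and $\phi(-x)\ge x^2/2$ for $0\le x<1$. Each reduces to checking that an explicit smooth function on the appropriate interval vanishes at $0$ and has nonnegative derivative, which follows from a short computation (or from the integral representation $\phi(x)=\int_0^x\log(1+s)\,ds$ together with monotonicity). There is no genuine obstacle here — the only mildly delicate point is the polynomial comparison with $\phi$ in the lower tail, where one must use $0<x<1$ to avoid the singularity of $\log(1-x)$; this is exactly why the hypothesis in the first part of the theorem is restricted to $0<x<1$.
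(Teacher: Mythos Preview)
Your proof is correct and is the standard Bernstein--Chernoff argument. Note, however, that the paper does not actually prove Theorem~\ref{thm:chernoff}: it merely states the bounds and cites \cite{janson00random}, Theorem~2.1, as a reference. So there is no ``paper's own proof'' to compare against; what you have written is essentially the textbook derivation that the cited reference contains, and it is entirely adequate.
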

We will use the coarser bounds most of the time. The finer bound will only be used twice, once in the proof of Lemma \ref{largesets} and once in the proof of Theorem \ref{thm:main}.

%\item[4.] The FKG inequality.

We will also have use of the FKG inequality (\cite{janson00random}, Theorem 2.12), which we now recall. Let $\Gamma=[n]=\{1, 2, \cdots, n\}$. Given $0\le p_1, \cdots, p_n\le 1$, $\Gamma_{p_1, \cdots, p_n} \subset [n]$ is obtained by including element $i$ with probability $p_i$ independently for all $i$. We say a function $f: 2^{\Gamma}\rightarrow \mathbb{R}$ is \emph{increasing} if $f(A)\le f(B)$ for $A\subset B$, $f$ \emph{decreasing} if $f(A)\ge f(B)$ for $A\subset B$.

\begin{thm}[FKG inequality]\label{thm:FKG}
If the random variables $X_1$ and $X_2$ are two increasing or two decreasing functions of $\Gamma_{p_1, \cdots, p_n}$, then
$$\mathbb{E}(X_1X_2)\ge\mathbb{E}(X_1)\mathbb{E}(X_2).$$
In particular, if $\mathcal{Q}_1$ and $\mathcal{Q}_2$ are two increasing or two decreasing families of subsets of $\Gamma$, then
$$\mathbb{P}\left(\Gamma_{p_1,\cdots, p_n}\in\mathcal{Q}_1\cap\mathcal{Q}_2\right)\ge\mathbb{P}\left(\Gamma_{p_1,\cdots,p_n}\in\mathcal{Q}_1\right)\mathbb{P}\left(\Gamma_{p_1,\cdots,p_n}\in\mathcal{Q}_2\right).$$
\end{thm}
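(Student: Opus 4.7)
The plan is to deduce the second assertion (about families of subsets) from the first by applying it to the indicator functions $X_i = \mathbf{1}[\Gamma_{p_1, \ldots, p_n} \in \mathcal{Q}_i]$, which are increasing (resp.\ decreasing) exactly when $\mathcal{Q}_i$ is increasing (resp.\ decreasing). So the substantive task is to establish $\e(X_1 X_2) \ge \e(X_1)\e(X_2)$ for monotone $X_1, X_2$, which I would prove by induction on $n$.

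For the base case $n=1$, the random set $\Gamma_{p_1}$ takes only two values, and writing $a_i = X_i(\{1\}) - X_i(\emptyset)$ the numbers $a_1, a_2$ have the same sign (both nonnegative if $X_1, X_2$ are increasing, both nonpositive if both are decreasing). A direct expansion then gives
\[
\e(X_1 X_2) - \e(X_1)\e(X_2) = p_1(1-p_1)\, a_1 a_2 \ge 0.
\]

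For the inductive step, I would condition on $I_n := \mathbf{1}[n \in \Gamma_{p_1, \ldots, p_n}]$. On each of the events $\{I_n=0\}$ and $\{I_n=1\}$, $X_1$ and $X_2$ remain monotone (in the same direction) functions of the independent set $\Gamma_{p_1, \ldots, p_{n-1}}$, so the induction hypothesis yields pointwise that $\e(X_1 X_2 \mid I_n) \ge \e(X_1 \mid I_n)\,\e(X_2 \mid I_n)$. Setting $g_i(I_n) = \e(X_i \mid I_n)$, each $g_i$ is a monotone function of $I_n$ in the same direction as $X_i$, because averaging a monotone function over a coordinate independent of $I_n$ preserves monotonicity in $I_n$. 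Taking expectations and applying the base case to $g_1, g_2$ then gives
\[
\e(X_1 X_2) \ge \e\bigl(g_1(I_n)\,g_2(I_n)\bigr) \ge \e(g_1)\,\e(g_2) = \e(X_1)\,\e(X_2).
\]

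The main step requiring care is the monotonicity of $g_i$ in $I_n$; this can be verified by a simple coupling in which the status of element $n$ is flipped while all other coordinates are held fixed, using the pointwise monotonicity of $X_i$. Once this is in hand, the two appeals to the $n=1$ case (inside and outside the conditioning) combine cleanly, and the induction closes. Beyond this, the proof is essentially bookkeeping, and one should note that the same argument handles the mixed case (one increasing, one decreasing) with the opposite inequality — which is why the hypotheses exclude that case.
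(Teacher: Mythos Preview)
Your argument is the standard Harris induction proof of the product-measure FKG inequality, and it is correct as written: the base case computation is right, the conditioning step is legitimate because the coordinates are independent, and the monotonicity of $g_i$ in $I_n$ follows from the pointwise monotonicity of $X_i$ exactly as you indicate. The deduction of the second assertion from the first via indicator functions is also fine.

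There is nothing to compare against, however: the paper does not prove Theorem~\ref{thm:FKG} at all. It is quoted as a known result with a reference to \cite{janson00random}, Theorem~2.12, and is used only as a tool (specifically to derive Corollary~\ref{cor:FKG}). So your proposal supplies a proof where the paper simply cites one.
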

We will in fact use the following, easy consequence of the FKG inequality.

\begin{cor}\label{cor:FKG}
If $C$ is an increasing event, $A$ is a decreasing event, then
$$\p{C ~|~ A }\le \p{C}.$$
\end{cor}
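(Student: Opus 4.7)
The plan is to derive Corollary \ref{cor:FKG} from Theorem \ref{thm:FKG} by a simple complementation trick, since the FKG inequality as stated requires both events to be monotone in the same direction. The key observation is that if $A$ is a decreasing event, then $A^c$ is an increasing event, so FKG applies to the pair $(C, A^c)$ and yields $\mathbb{P}(C \cap A^c) \geq \mathbb{P}(C)\,\mathbb{P}(A^c)$.

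From there, I would expand $\mathbb{P}(C \cap A^c) = \mathbb{P}(C) - \mathbb{P}(C \cap A)$ and $\mathbb{P}(A^c) = 1 - \mathbb{P}(A)$, and rearrange the FKG inequality to obtain $\mathbb{P}(C \cap A) \leq \mathbb{P}(C)\,\mathbb{P}(A)$. Assuming $\mathbb{P}(A) > 0$ (otherwise the conditional probability is undefined and the statement is vacuous), dividing through by $\mathbb{P}(A)$ gives the claimed bound $\mathbb{P}(C \mid A) \leq \mathbb{P}(C)$.

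There is no substantive obstacle here; this is a purely formal deduction, and the only thing to watch is the direction of monotonicity when invoking Theorem \ref{thm:FKG}. Intuitively the statement just says that conditioning on a ``downward'' event can only make an ``upward'' event less likely, which is the content of positive association.
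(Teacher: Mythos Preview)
Your argument is correct and is exactly the standard derivation. The paper itself does not spell out a proof of this corollary; it merely labels it an ``easy consequence of the FKG inequality,'' and your complementation trick (applying Theorem~\ref{thm:FKG} to the increasing pair $(C,A^c)$ and rearranging) is precisely the intended one-line justification.
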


We begin by bounding the edge expansion of all but the very large connected sets, in the case that $c$ is large.
\begin{lem}\label{lem:eSSc}
Fix $c$ sufficiently large that $c/720-\log(4(c+2k)) > 5$. Then for all $n$,
\[
\p{\exists S \in \bigcup_{\log n \leq j \leq 9n/10} B_j, e(S,S^c) \le c|S|/12} \leq \frac{1}{n^3}.
\]
\end{lem}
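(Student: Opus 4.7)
The plan is a union bound via Markov's inequality, after factoring each bad-event probability into a connectedness part and a small-boundary part. For a fixed $S\subseteq[n]$ with $|S|=j$, the event $\{S\in B_j\}$ depends only on the random edges with both endpoints in $S$, while $\{e(S,S^c)\le cj/12\}$ depends only on the random edges with exactly one endpoint in $S$. These two collections of Bernoulli variables are disjoint, so the two events are independent. (Equivalently, the first event is increasing and the second decreasing in the edges of $H$, so Corollary~\ref{cor:FKG} applies directly.) Therefore
\[
\e\bigl|\{S\in B_j:e(S,S^c)\le cj/12\}\bigr|=\sum_{|S|=j}\p{S\in B_j}\cdot\p{e(S,S^c)\le cj/12}.
\]
Summing the first factor over size-$j$ subsets gives $\e|B_j|$, which Proposition~\ref{prop:lagrange1} bounds by $n(4(c+2k))^j$.

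For the boundary probability, fix $S$ with $j=|S|\le 9n/10$. The number of non-ring pairs between $S$ and $S^c$ is at least $j(n-j)-2kj$, and $e(S,S^c)$ stochastically dominates a $\mathrm{Bin}(j(n-j)-2kj,\,c/n)$ variable whose mean $\mu$ satisfies $\mu\ge cj(n-j)/n-2kcj/n$. Since $j\le 9n/10$, this yields $\mu\ge(1-o(1))\,cj/10$ uniformly in $j$ for $n$ large relative to $k$. Writing $cj/12=(1-x)\mu$ gives $x\ge 1/6-o(1)$, and Theorem~\ref{thm:chernoff} produces
\[
\p{e(S,S^c)\le cj/12}\le\exp\!\pran{-\mu x^2/2}\le\exp\!\pran{-(1-o(1))\,cj/720}.
\]
Combining the two bounds, the expected number of bad size-$j$ sets is at most $n\exp\bigl(j\bigl[\log(4(c+2k))-c/720\,(1-o(1))\bigr]\bigr)$. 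The hypothesis $c/720-\log(4(c+2k))>5$ forces the bracket to be at most $-4$ for $n$ large, so the geometric sum over $j$ from $\lceil\log n\rceil$ to $\lfloor 9n/10\rfloor$ is $O(n\cdot n^{-4})$, and the slack in the hypothesis tightens this to at most $n^{-3}$ for all $n$ large, with small $n$ absorbed into the constants.

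The main delicacy is the Chernoff estimate itself: the constant $720$ is essentially tight at the endpoint $j\approx 9n/10$, where $(n-j)/n\approx 1/10$ forces $\mu\approx cj/10$ and the target $cj/12$ sits exactly a $1/6$ deviation below the mean, giving $\mu x^2/2\approx cj/720$. For any smaller $j$ the exponent is strictly better, so the worst case at $j\approx 9n/10$ dictates the hypothesis; one need only verify that the $O(j/n)$ correction coming from subtracting ring cross-edges does not disturb this match, which it does not once $n$ is large compared to $k$. The $>5$ gap in the hypothesis (rather than the bare $>3$ one might expect for an $n^{-3}$ bound) is precisely what is needed to absorb the $o(1)$ slack from the Chernoff computation and to comfortably sum over $j$ up to $9n/10$.
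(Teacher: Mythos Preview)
Your proof is essentially the paper's: factor via independence of $E(S,S)$ and $E(S,S^c)$, bound the boundary tail by Chernoff and the connectedness sum by Proposition~\ref{prop:lagrange1}, then union bound over $j$. The only substantive difference is in the stochastic domination step: you count just the non-ring cross-edges and subtract up to $2kj$ ring cross-pairs, which introduces an $o(1)$ slop in the Chernoff exponent; the paper instead observes that each of the $j(n-j)$ cross-pairs is an edge of $H$ with probability \emph{at least} $p$ (ring pairs with probability $1$), so $e(S,S^c)$ stochastically dominates $\mathrm{Bin}(j(n-j),p)$ and hence $\mathrm{Bin}(jn/10,p)$ outright, giving a clean $e^{-cj/720}$ with no correction term. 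That is what makes the ``for all $n$'' claim immediate---your phrase ``small $n$ absorbed into the constants'' has nowhere to go, since the target $\le n^{-3}$ carries no adjustable constant, and your $o(1)$ threshold depends on the (possibly tiny) strict slack in the hypothesis $c/720-\log(4(c+2k))>5$.
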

\begin{proof}
Fix $j \in [\log n,9n/10]$, and $S \subset [n]$ with $|S|=j$. Note that $E(S,S^c)$ is independent of $H[S]$, so given that $S \in B_j$,
$e(S,S^c)$ stochastically dominates a $\mathrm{Bin}(j(n-j),p)$ random variable. Since $j(n-j) \ge jn/10$, it follows that under this conditioning $e(S,S^c)$ also stochastically dominates $X$, a $\mathrm{Bin}(nj/10,p)$ random variable. By a union bound it follows that
\begin{align*}
	& \quad \p{\exists S, |S|=j, H[S]\mbox{ connected}, e(S,S^c) \le cj/12} \\
\leq 	& \quad \sum_{S,|S|=j} \p{e(S,S^c) \le cj/12 ~|~ H[S]\mbox{ connected}} \cdot \p{H[S]\mbox{ connected}} \\
\leq	& \quad \p{X \leq cj/12} \cdot \e{|B_j|} \\
\leq 	& \quad e^{-cj/720} \cdot n(4(c+2k))^j,
\end{align*}
the last line by a Chernoff bound and by Proposition~\ref{prop:lagrange1}. (This is a typical example of our use of Proposition~\ref{prop:lagrange1} in the remainder of the paper.)

By our assumption that $c/720-\log(4(c+2k)) > 5$ and since $j \geq \log n$, we obtain that this probability is at most
\[
\exp\pran{\log n + j\pran{\log(4(c+2k))-\frac{c}{720}}}  \leq \frac{1}{n^4}.
\]
The result follows by a union bound over $j \in [\log n,9n/10]$.
\end{proof}
The next lemma provides a lower bound on the edge expansion of very large sets, again in the case that $c$ is sufficiently large.
\begin{lem}\label{9nover10}
If $c > 40$ then for all $n$ sufficiently large,
\[
\p{\exists S \subset [n]: |S| > 9n/10, e(S) \le |E(H)|} \le (2/e)^n.
\]
%is
%then for all $n$ sufficiently large, with probability at least $1-(2/e)^n$, for all $S \subset [n]$ with $|S| > 9n/10$ we have $e(S) \geq |E(H)|$.
\end{lem}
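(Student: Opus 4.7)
The plan has three steps: first, reformulate the event $e(S) \le |E(H)|$ as a condition involving only the random (non-ring) edges of $H$; second, bound the per-subset probability using a two-tailed Chernoff applied to independent binomials; third, take a union bound.

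For the reformulation, write $T = S^c$ and $t = |T| < n/10$. Counting edges in two ways gives $e(S) - |E(H)| = |E(H[S])| - |E(H[T])|$, so the event is equivalent to $|E(H[T])| \ge |E(H[S])|$. Splitting edges into ring edges $E_r$ and random edges $E_p$, and observing that $R_{n,k}$ is $2k$-regular (so $|E_r[U]| = k|U| - e_r(U,U^c)/2$ for any $U$ by double-counting, hence $|E_r[S]| - |E_r[T]| = k(n-2t)$ exactly), the event reduces to
\[
|E_p[T]| - |E_p[S]| \ge k(n-2t) \ge 4kn/5.
\]

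For the per-subset bound, the key observation will be that $|E_p[T]|$ and $|E_p[S]|$ are independent, since they involve disjoint families of non-ring pair-indicators, and each is binomial with parameter $c/n$. For $t \le n/10$ and $n$ large, their means satisfy $\mu_T := \mathbb{E}|E_p[T]| \le cn/200$ while $\mu_S := \mathbb{E}|E_p[S]| \ge cn/4$. I will split the bad event into $\{|E_p[S]| \le \mu_S/2\} \cup \{|E_p[T]| \ge \mu_S/2 + k(n-2t)\}$. Lower-tail Chernoff bounds the first by $\exp(-\mu_S/8) \le \exp(-cn/32)$; multiplicative upper-tail Chernoff $(e\mu_T/m)^m$ bounds the second by an even smaller probability, since for $c > 40$ the threshold $m \ge \mu_S/2$ exceeds $\mu_T$ by a factor of at least $25$. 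A union bound over the at most $2^n$ sets $T \subset [n]$ with $|T| \le n/10$ then yields a total bound of order $2^{n+1} \exp(-cn/32)$, which is at most $(2/e)^n$ for $c > 40$ and $n$ sufficiently large (the exponent $n\log 2 - cn/32$ is strictly below $n(\log 2 - 1) = n \log (2/e)$).

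The main obstacle will be avoiding the naive approach of bounding just $\p{|E_p[T]| \ge k(n-2t)}$ (using only $|E_p[S]| \ge 0$): this becomes too weak once $c$ is large, because $\mu_T$ itself grows linearly in $c$ and the threshold $k(n-2t)$ eventually falls below the mean, making Chernoff useless. The two-tailed split fixes this by also exploiting concentration of $|E_p[S]|$ around its much larger mean, and the hypothesis $c > 40$ provides just enough slack for the per-set Chernoff exponent to beat the $2^n$ factor from the union bound.
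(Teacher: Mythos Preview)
Your proof is correct and follows essentially the same strategy as the paper's: pass to the complement $T=S^c$ with $|T|<n/10$, separate ring edges from random edges, split the bad event at a threshold and apply Chernoff to both tails, then take a union bound over $\le 2^n$ small sets. The only cosmetic difference is that the paper compares the random-degree sum $e^*(T)$ against the total random edge count $|E^*|$ (using the necessary condition $e^*(T)\ge |E^*|$ and threshold $cn/5$), whereas you work with the internal random edge counts $|E_p[T]|$ and $|E_p[S]|$ via the exact identity $e(S)-|E(H)|=|E(H[S])|-|E(H[T])|$; both decompositions lead to the same two-tailed binomial estimate and the same use of $c>40$ to beat the $2^n$ factor.
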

\begin{proof}
In this proof write $E=E(H)$. Since, for any set $S \subset [n]$, $e(S)+e(S^c)=2|E|$, it suffices to prove that
\[
\p{\exists S \subset [n]: |S| < n/10, e(S) \ge |E|} \le (2/e)^n.
\]
Fix any set $S$ with $|S| < n/10$. Write $e^*(S)= \sum_{v \in S} |\{e \ni v: e \not\in E(R_{n,k})\}|$ for the total degree incident to $S$ not including edges of the ring $R_{n,k}$, and similarly let $E^* = E \setminus E(R_{n,k})$.
Since $|S| < n/10 < n/2$, in order to have $e(S) \ge |E|$ we must in fact have $e^*(S) \ge |E^*|$.
Also, $e^*(S)$ is stochastically dominated by $\mathrm{Bin}(n^2/10,p)$, and $|E^*|\eqdist\mathrm{Bin}(n(n-1-2k)/2, p)$. When $n$ is large enough that $n-1-2k > 4n/5$, we thus have
\begin{align*}
\p{e(S) \ge |E(H)|} 	& \leq \p{|E^*| \le cn/5} + \p{ e^*(S) > cn/5 } \\
				& \leq \p{|E^*| \le c(n-1-2k)/4} + \p{ e^*(S) > cn/5 } \\
				& < \exp\pran{-c(n-1-2k)/16} + \exp\pran{-cn/40},
\end{align*}
by a Chernoff bound. For $n$ sufficiently large the last line is at most $2e^{-cn/40} < 2e^{-n}$, and the result follows by a union bound over all $S$ with $|S| \leq n/10$ (there are less than $2^{n-1}$ such sets).
\end{proof}
A similar but slightly more involved argument yields the following result, which will be useful for dealing with smaller values of $c$.
\begin{lem}\label{largesets}
For any $c > 0$ there is $\beta=\beta(c) > 0$ such that for all $n$ sufficiently large
\[
\p{\exists S \subset [n]: |S| > (1-\beta) n, e(S) \le |E(H)|} \le (1-\beta)^n.
\]
%, with probability at least $1-(1-\beta)^n$,
%for all $S \subset [n]$ with $|S| \geq (1-\beta)n$ we have $e(S) \geq |E(H)|$.
\end{lem}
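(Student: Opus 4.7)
The plan is to adapt the proof of Lemma~\ref{9nover10} to arbitrary $c>0$, using the sharp form of the Chernoff bound to close the gap. As in Lemma~\ref{9nover10}, since $e(S)+e(S^c)=2|E(H)|$ the stated event is equivalent to
\[
\p{\exists S'\subset[n]:|S'|<\beta n,\ e(S')\ge|E(H)|}\le(1-\beta)^n.
\]
Decomposing $e(S')=2k|S'|+e^*(S')$ and $|E(H)|=kn+|E^*|$, and splitting $|E^*|=A+B+C$ with $e^*(S')=2A+B$, where $A$, $B$, $C$ are the numbers of non-ring edges respectively inside $S'$, between $S'$ and $(S')^c$, and inside $(S')^c$, a direct calculation gives $e^*(S')-|E^*|=A-C$. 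The inequality $e(S')\ge|E(H)|$ therefore rewrites as $A-C\ge k(n-2|S'|)$, and since $C\ge 0$ it forces $A\ge kn(1-2\beta)$.

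The random variable $A$ is stochastically dominated by $\mathrm{Bin}(\binom{s}{2},c/n)$ with $s=|S'|\le\beta n$, so it has mean at most $c\beta^2 n/2$. I seek a deviation to $kn(1-2\beta)$, which is a factor $\asymp 1/\beta^2$ above the mean, so only the sharp Chernoff bound of Theorem~\ref{thm:chernoff}---via the function $\phi(x)=(1+x)\log(1+x)-x$---suffices. For $1+x=t/(mp)$ large, $\phi(x)\approx(1+x)\log(1+x)$, which for $s\le\beta n$ and $\beta$ small yields
\[
\p{A\ge kn(1-2\beta)}\le\exp\!\pran{-kn(1-2\beta)\bigl(\log(k/(c\beta^2))-1\bigr)}\le\beta^{kn/2}.
\]

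Union bounding over $s\le\beta n$ contributes at most $\beta n\cdot(e/\beta)^{\beta n}$ possible sets, so the total probability is at most $\beta n\cdot(e/\beta)^{\beta n}\cdot\beta^{kn/2}$. Dividing the logarithm by $n$, the dominant contribution is $\beta(1+\log(1/\beta))+(k/2)\log\beta+o(1)=\beta-(k/2-\beta)\log(1/\beta)+o(1)$. Since $k\ge 1$, for $\beta$ sufficiently small in terms of $c$ the term $(k/2-\beta)\log(1/\beta)$ dwarfs every other contribution and the exponent drops below $-\beta\ge\log(1-\beta)$, which is what is needed.

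The main obstacle is balancing the two competing exponential factors: the $(e/\beta)^{\beta n}$ union-bound cost and the upper-tail probability for $A$. Because $\mathbb{E}[A]=O(\beta^2 n)$ but the target is of order $n$, only the fine $\phi(x)$ form of Chernoff yields a tail small enough to absorb $(e/\beta)^{\beta n}$ with room to spare for $(1-\beta)^n$; the crude Gaussian-type bound that worked in Lemma~\ref{9nover10} exploited the much larger mean available when $c>40$ and is insufficient here. This is exactly the ``finer bound'' use flagged immediately after Theorem~\ref{thm:chernoff}.
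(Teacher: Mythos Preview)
Your argument is correct and in fact slightly cleaner than the paper's own proof. Both proofs begin with the same complementation $e(S)+e(S^c)=2|E|$ and both ultimately rely on the fine Chernoff bound $\exp(-mq\,\phi(x))$, as the crude quadratic bound is too weak here. The difference lies in which random variable you feed into Chernoff.

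The paper works with $e^*(S')$, the total non-ring degree of $S'$, which is dominated by $\mathrm{Bin}(\beta n(n-1),p)$ with mean $\asymp c\beta n$; since the comparison is against the random quantity $|E|$, the paper must first split off the event $\{|E|\le \binom{n}{2}p - 2k\beta n\}$ via a separate Chernoff bound, and only then bound $\p{e^*(S')>\binom{n}{2}p-4k\beta n}$, a deviation by a factor $\sim 1/(2\beta)$ above the mean. You instead decompose $e^*(S')-|E^*|=A-C$ and observe that $e(S')\ge|E(H)|$ forces the \emph{deterministic} inequality $A\ge k(n-2|S'|)$. This eliminates the case split entirely and leaves only $A\preceq \mathrm{Bin}\!\bigl(\binom{s}{2},c/n\bigr)$ with mean $\asymp c\beta^2 n/2$ needing to exceed $\asymp kn$, a deviation ratio $\sim 2k/(c\beta^2)$. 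The larger ratio gives a stronger tail bound, $\beta^{kn/2}$, which beats the union-bound cost $(e/\beta)^{\beta n}$ with more room to spare than the paper's $(3\beta)^{cn/12}$.

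In short: same skeleton, same use of the fine $\phi$-bound, but your choice of $A$ over $e^*(S')$ trades a slightly more careful algebraic identity up front for a one-step proof with no auxiliary event on $|E|$.
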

\begin{proof}
As in the proof of Lemma~\ref{9nover10}, it suffices to prove that for some $\beta > 0$,
\[
\p{\exists S\subset [n]: |S| < \beta n, e(S) > |E|} \le (1-\beta)^n.
\]
Furthermore, since $\p{\exists S\subset [n]: |S| < \beta n, e(S) > |E|}$ decreases as $\beta$ decreases, it suffices to find
$\beta > 0$ and $\epsilon > 0$ such that for $n$ sufficiently large,
\[
\p{\exists S\subset [n]: |S| < \beta n, e(S) > |E|} = O(e^{-\epsilon n}).
\]
We fix $0 < \beta < 1/(3e)$ small enough that $1/(2\beta) - 8k/c > 1 + 1/(3\beta)$.
Additionally, recalling the function $\phi(x) = (1+x)\log(1+x)-x$ from Theorem~\ref{thm:chernoff},
we choose $\beta$ small enough that $\phi(1/(3\beta)) > \log(1/(3\beta))/(6\beta)$.
Finally, we assume $\beta < c/36$.

For any $S \subset [n]$ with $|S| < \beta n$ and with $e(S) > |E|$, defining $e^*(S)$ and $E^*$ as in the proof of Lemma~\ref{9nover10}, we then have
%If the claim is not true, then we can find set $S\subset [n]$ such that $|S|<\beta n, e(S)>|E(H)|$. If $e(S)>|E(H)|$, then
$e^*(S)\ge|E|-2k|S|\ge |E|-2k\beta n$.
\begin{align}
& \p{\exists S\subset [n]: |S| < \beta n, e(S) > |E|}\nonumber\\
\le~ & \p{\exists S\subset [n]: |S| < \beta n, e^*(S)\ge |E|-2k\beta n}\nonumber\\
\le~ & \p{|E| \le {n \choose 2}p-2k\beta n}+\p{\exists S \subset [n]: |S|<\beta n, e^*(S)> {n\choose 2}p-4k\beta n}.\label{eq:largesets1}
\end{align}
Since $|E|$ stochastically dominates $Bin({n \choose 2}, p)$, by a Chernoff bound we have
\begin{align}
\p{|E| \le {n \choose 2}p-2k\beta n} & \le \exp\left(-\frac{1}{2}{n\choose 2}p\left(\frac{2k\beta n}{{n \choose 2} p}\right)^2\right)\nonumber\\
& <  \exp\left(-\frac{4 k^2\beta^2}{c}\cdot n\right),\label{eq:largesets2}%\nonumber\\
%& \le  \exp\left(-\epsilon n\right)\label{eq:largesets2}
\end{align}
which handles the first summand in (\ref{eq:largesets1}).
%for some $\epsilon>0$ depending on $\beta$.
For the second summand, let $X$ be Binomial$(\beta n(n-1),p)$ distributed, and note that for
all $S \subset [n]$ with $|S| \le \beta n$, $e^*(S)$ is stochastically dominated by $X$.
Also, for $\beta < 1/3$ there are less than $2{n \choose \lfloor \beta n \rfloor}$ subsets of $[n]$ of size less than $\beta n$, and follows by a union bound that
\begin{equation}\label{eq:largesets3}
\p{\exists S \subset [n]: |S|<\beta n, e^*(S)> {n\choose 2}p-4k\beta n}
\le 2 {n \choose \lfloor \beta n \rfloor} \p{X > {n \choose 2}p - 4k\beta n}.
\end{equation}
Since $1/(p(n-1))=n/(c(n-1)) \le 2/c$ for all $n \ge 2$, and by our assumption that $1/(2\beta) - 8k/c > 1 + 1/(3\beta)$, we have
\[
{n \choose 2}p - 4k\beta n = \beta n(n-1)p \left(\frac{1}{2\beta} - \frac{4k}{p(n-1)}\right)
> \beta n(n-1) p \left(1+ \frac{1}{3\beta}\right).
\]
%for some $S$ with $|S|<\beta n$, we fix some small $\delta>0$ and pick $\beta$ small enough such that $4k\beta n<\delta {n\choose 2}p$, also note that $e^*(S)$ is stochastically dominated by $Bin(\beta n(n-1), p)=Bin(2\beta{n\choose 2}, p)$ and use
By the sharper of the Chernoff upper bounds in Theorem \ref{thm:chernoff} and by our assumption that $\phi(1/(3\beta)) > \log(1/(3\beta))/(6\beta)$, it follows that
\begin{align*}
\p{X > {n \choose 2}p - 4k\beta n}
&\le \exp\left(- \beta n(n-1)p\cdot \phi(1/(3\beta))\right)\\
&\le \exp\left(- \beta n(n-1)p \frac{\log(1/(3\beta))}{6\beta}\right) \\
& <  \exp\left(- \frac{c \log (1/(3\beta))}{12} n\right)
\end{align*}
for $n$ sufficiently large.

%\begin{align*}
%\p{e^*(S)\ge {n\choose 2}p-4k\beta n} & \le \p{e^*(S)\ge (1-\delta){n\choose 2}p}\\
%& \le \p{Bin(2\beta{n\choose 2}, p)\ge (1-\delta){n\choose 2}p}\\
%& \le \exp\left(-2\beta{n\choose 2}p\frac{1}{3\beta}\log(\frac{1}{3\beta})\right)\\
%& = \exp\left(-Cn\log(\frac{1}{3\beta})\right)
%\end{align*}
%where $C=c/3$ and the second last line is true since if we let $\Delta=\frac{(1-\delta){n\choose 2}p}{2\beta{n\choose 2}p}-1$, then $\Delta= \frac{1-\delta}{2\beta}-1\ge \frac{1}{3\beta}$ if we pick $\beta>0$ small enough.
Combined with (\ref{eq:largesets3}) this yields
\begin{align*}
& \p{\exists S \subset [n]: |S|<\beta n, e^*(S)> {n\choose 2}p-4k\beta n} \\
<~& 2 {n \choose \lfloor \beta n \rfloor} \exp\left(- \frac{c \log (1/(3\beta))}{12} n\right) \\
\le~& 2 \left(\frac{e}{\beta}\right)^{\beta n}\exp\left(- \frac{c \log (1/(3\beta))}{12}n\right) \\
=~& 2 \exp \left(n \left(\beta + \beta\log(1/\beta) - (c/12) \log(1/(3\beta))\right)\right) \\
<~& 2 \exp \left(-(c/36) n\right),
\end{align*}
where in the last inequality we used that $\beta+\beta\log(1/\beta) < 2\beta\log(1/\beta) <
(c/18)\log(1/\beta)$ and that $\log(1/(3\beta)) > 1$. Together with (\ref{eq:largesets1}) and (\ref{eq:largesets2}) we obtain
\[
\p{\exists S\subset [n]: |S| < \beta n, e(S) > |E|} \le \exp\left(-\frac{4 k^2\beta^2}{c}\cdot n\right)+ 2 \exp \left(-(c/36) n\right),
\]
which completes the proof.
%Since there are around ${n \choose \beta n}$ sets $S$ of size less than $\beta n$, and $${n \choose \beta n}\le \left(\frac{en}{\beta n}\right)^{\beta n}=\exp\left(n\left(\beta+\beta\log(\frac{1}{\beta})\right)\right).$$ Note that $\beta\log(\frac{1}{\beta})\rightarrow 0$ as $\beta \rightarrow 0$, and take union bound for the second summand, we can bound the total probability by $$\exp(-\epsilon n)+\exp(-(cn\log(\frac{1}{3\beta})-\beta n)).$$ Therefore we can find some $\beta_0>0$ and $\gamma>0$ such that $$\p{\exists S, |S|<\beta n ~|~ e(S)\ge|E(H)|}\le \exp(-\gamma n), ~\forall \beta \le \beta_0.$$ Take $\beta$ small enough such that $\exp(-\gamma n)\le (1-\beta)^n$ will suffice.
\end{proof}

In order to use Lemma~\ref{lem:eSSc} to bound the conductance of connected subsets $S$ of $V(H_{n,k,p})$ of size at most $9n/10$, we need to know that for such subsets we have $e(S) = O(|S|)$ with high probability. Such a bound is provided by the following lemma.

For given $k$, let $x=x_k$ be the positive solution of equation $x/720-\log(4(x+2k))=5$, and let $M=M(c,k) = k+1+10\max(x_k,c)$.
We remark that $x_k \geq 40$ for all $k \geq 1$.
\begin{lem}\label{lem:eSS}
For all $c > 0$ and for all $n$,
\[
\p{\exists S \in \bigcup_{1 \leq j \leq n} B_j, e(S,S) > M\cdot \max(|S|,\log n)} \leq \frac{1}{n^3}.
\]
\end{lem}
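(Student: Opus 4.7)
The plan is to combine Proposition~\ref{prop:lagrange1} (which controls $\e|B_j|$) with a Chernoff tail bound on $e(S,S)$ for fixed $S$, using the van den Berg--Kesten (BK) inequality to separate the two increasing events $\{S \in B_j\}$ and $\{e(S,S) > m\}$, where $m = M\max(j,\log n)$.

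First I would fix $j \in \{1,\dots,n\}$ and, for each $S \subset [n]$ with $|S|=j$, introduce the increasing events $A_S = \{G[S] \text{ is connected}\}$, witnessed by any spanning tree of $G[S]$ (which uses $j-1$ edges), and $B_S = \{e(S,S) \ge m - j + 2\}$, witnessed by any $m-j+2$ edges of $G[S]$. Disjoint witnesses for $A_S$ and $B_S$ force $e(S,S) \ge (j-1) + (m-j+2) = m+1$, so $\{S \in B_j\} \cap \{e(S,S) > m\}$ is exactly the disjoint-occurrence event $A_S \circ B_S$. The BK inequality then gives $\p{S \in B_j, e(S,S) > m} \le \p{A_S}\p{B_S}$, and summing over $S$ while applying Proposition~\ref{prop:lagrange1} yields
\[
\e|\{S \in B_j : e(S,S) > m\}| \le n(4(c+2k))^j \cdot \max_{|S|=j} \p{B_S}.
\]

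Next I would bound $\p{B_S}$ uniformly. Writing $e(S,S) = r(S) + Y$ with $r(S) \le kj$ ring edges and $Y$ stochastically dominated by $\mathrm{Bin}(\binom{j}{2}, c/n)$, the effective threshold is at least $(M-k-1)\max(j,\log n)$. With $M = k+1+10\max(x_k, c)$, this is $\ge 10\max(x_k, c) \cdot \max(j, \log n)$, well above the Binomial mean. The standard Chernoff tail $\p{\mathrm{Bin}(N,q) \ge \ell} \le (eNq/\ell)^\ell$ then gives $\p{B_S} \le 7^{-(M-k-1)\max(j, \log n)}$.

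For $j \ge \log n$ the combined bound is $n \cdot e^{-\alpha j}$ with $\alpha = (M-k-1)\log 7 - \log(4(c+2k))$. A short case split on $c \le x_k$ versus $c > x_k$, invoking the defining equation $x_k/720 = 5 + \log(4(x_k+2k))$, shows $\alpha \ge 5$, so the bound is $\le n^{-4}$. For $j < \log n$ the $\max$ in the threshold makes $\p{B_S}$ even smaller and the same conclusion follows. Summing over $j \in [1,n]$ gives a total expected count of $\le n^{-3}$, and Markov's inequality completes the argument.

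The main obstacle is the decoupling step: both events $\{S \in B_j\}$ and $\{e(S,S) > m\}$ are increasing in the edge indicators, hence positively correlated, and Corollary~\ref{cor:FKG} (which requires an increasing and a decreasing event) points in the wrong direction. BK provides the needed inequality with the right sign, at the cost of only an additive shift of $j-1$ in the Chernoff threshold, which is easily absorbed by the factor $10\max(x_k,c)$ in $M$.
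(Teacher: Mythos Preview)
Your proposal is correct, and the decoupling via the van~den~Berg--Kesten inequality is a genuinely different route from the paper's argument.

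The paper does not use BK. Instead it first observes that, for fixed $M$, the event in question is increasing in the edge set; since $M(c,k)$ is constant for $c\le x_k$, this lets the authors reduce at the outset to the case $c\ge x_k$. They then partition the event $\{S\in B_j\}$ according to the \emph{first} spanning tree $t_i$ of $H[S]$ (in some fixed enumeration), writing $E_i=F_i\setminus\bigcup_{j<i}F_j$ with $F_i=\{t_i\subset H\}$. Conditional on $F_i$, the remaining edges are still independent Bernoulli, so $|E(S,S)\setminus E(t_i)|$ admits a direct Chernoff bound; the additional conditioning on $\bigcap_{j<i}F_j^c$ is a decreasing event and is removed via Corollary~\ref{cor:FKG}. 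The $j-1$ tree edges are absorbed exactly as in your shift of the threshold by $j-1$.

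Your BK step accomplishes in one stroke what the paper does with the spanning-tree partition plus FKG: by writing $\{S\in B_j,\ e(S,S)>m\}$ as $A_S\circ B_S$ (your verification of both inclusions is sound, and only the forward one is actually needed), you get $\p{S\in B_j,\,e(S,S)>m}\le \p{A_S}\p{B_S}$ directly. This is cleaner and avoids both the monotone reduction to large $c$ and the tree enumeration; the price is invoking BK, which is a slightly heavier hammer than FKG and is not stated in the paper. Your Chernoff computation with the bound $\p{\mathrm{Bin}(N,q)\ge\ell}\le(eNq/\ell)^\ell$ is equivalent to the finer bound in Theorem~\ref{thm:chernoff}, and the case split $c\le x_k$ versus $c>x_k$ using the defining relation $x_k/720=5+\log(4(x_k+2k))$ correctly yields $\alpha\ge 5$ in both regimes.
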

\begin{proof}
First note that for {\em fixed $M$}, the event whose probability we aim to bound is increasing, so increasing $p$ only increases its probability of occurrence. Since $M(c,k)$ is constant for $c \leq x_k$, it thus suffices to prove the bound for $c \geq x_k$, and we now assume that $c \geq x_k$. Note that in this case $c/720-\log(4(c+2k)) \geq 5$. For all $n$, and any $j \in [n]$, we have
\begin{align}
& \quad \p{ \exists S \in B_j, e(S,S) > (k+1+10c)\max(j,\log n)} \nonumber\\
\leq & \quad \sum_{S \subset [n], |S|=j} \p{e(S,S) > (k+1+10c)\max(j,\log n), S \in B_j}.\label{eq:eSS0}
\end{align}
Write ${\bf T}_S$ for the set of all possible trees on vertex set $S$ -- so $|{\bf T}_S|=|S|^{|S|-2}$ -- and list
the elements of ${\bf T}_S$ as $t_1,\ldots,t_r$.
For $i \in [r]$, let $F_i$ be the event that $t_i$ is a subgraph of $H$, and let $E_i=F_i \setminus \bigcup_{j < i} F_j$ be the event that $t_i$ is a subgraph of $H$ but none of $t_1,\ldots,t_{i-1}$ are subgraphs of $H$. The
events $E_i$ partition the event that $S \in B_j$, so
\begin{align}
			& \quad \p{e(S,S) > (k+1+10c)\max(j,\log n)~|~S \in B_j}\nonumber\\
 		\leq	&  \quad \max_{i \in [r]}\p{e(S,S) > (k+1+10c)\max(j,\log n)~|~E_i} \label{eq:eSS1}
%		\leq	&  \quad \max_{i \in [r]}\p{e(S,S) > (k+1+10c)\max(j,\log n)~|~t_i\mbox{ is a subgraph of}\ H}, \\
\end{align}
For fixed $i \in [r]$, write $\psub{i}{\cdot}$ for the conditional probability measure
$\p{\cdot~|~ F_i}$, and write $\Gamma^{(i)} = \{ uv: u,v \in S\}\setminus E(t_i)$.
Under $\mathbf{P}_i$, the set $E(S,S)\setminus E(t_i)$ is distributed as a Binomial$(p)$ random subset of $\Gamma^{(i)}$ since, after conditioning that $t_i$ is a subgraph of $H$, those edges not in $t_i$ still appear independently.

Write $C$ for the event that $|E(S,S)\setminus E(t_i)| > |(k+1+10c)\max(j,\log n)| - (j-1)$. Then
\begin{equation}\label{eq:eSS2}
\p{e(S,S) > (k+1+10c)\max(j,\log n)~|~E_i}
 = \psub{i}{C~\left|~ \bigcap_{j < i} F_j^c\right.}.
%& = \frac{\p{e(S,S) > (k+1+10c)\max(j,\log n),F_j,F_1,\ldots,
\end{equation}
Since $C$ is increasing and $\bigcap_{j < i} F_j^c$ is decreasing, it follows from Corollary \ref{cor:FKG} that
\[
(\ref{eq:eSS2}) \le \psub{i}{C} = \p{|E(S,S)\setminus E(t_i)| > (k+1+10c)\max(j,\log n) - (j-1)},
\]
the last equality holding since $E(S,S)\setminus E(t_i)$ is disjoint from $E(t_i)$, so independent of $F_i$.  Now, $|E(S,S)\setminus E(t_i)|$ is stochastically dominated by $kj+\mathrm{Bin}(\max(j,\log n) \cdot n/2,p)$. Letting $X$ have distribution $\mathrm{Bin}(n\max(j,\log n)/2, p)$, respectively, for all $i \in [r]$ we thus have
%since if we let $C$ be the event that $\{e(S,S) > (k+1+10c)\max(j,\log n)\}$, $A$ be the event that
%none of $t_1,\ldots t_{i-1}$ are subgraphs of $H$ and $B$ be the event that $t_i$ is a subgraph of $H$, then $C$ is an increasing event and $A$ is a decreasing event. And if we denote the new probability measure conditioning on $B$ by $\mathbf{P}^\ast$, i.e. $\mathbf{P}^\ast\{E\}=\p{E ~|~ B}$. Then $\mathbf{P}^\ast$ is still a product measure since conditioning on $t_i$ is a subgraph of $H$, those edges not in $t_i$ still appear independently. Therefore by Corollary \ref{cor:FKG}, we have $$\p{C ~|~ A, B}=\frac{\p{C, A ~|~ B}}{\p{A ~|~ B}}=\frac{\mathbf{P}^\ast\{C, A\}}{\mathbf{P}^\ast\{A\}}=\mathbf{P}^\ast\{C ~|~ A\}\le \mathbf{P}^\ast\{C\}=\p{C ~|~ B}.$$
%For any tree $t \in {\bf T}_S$,
%conditional on the event that t is a subgraph of $H$, we have that e(S,S) is stochastically dominated by $kj + (j-1)+  \mathrm{Bin}(jn/2,p)$, so by a Chernoff bound we obtain (let $X\sim\mathrm{Bin}(jn/2, p), Y\sim\mathrm{Bin}(n\max(j,\log n)/2, p)$)
%
\begin{align*}
& \quad \p{e(S,S) > (k+1+10c)\max(j,\log n)~|~E_i} \\
\leq & \quad \p{kj+(j-1)+X>(k+1+10c)\max(j, \log n)} \\
%\leq & \quad \p{kj+(j-1)+Y>(k+1+10c)\max(j, \log n)} \\
\leq & \quad \p{X>10c\max(j,\log n)}\\
\leq & \quad e^{-9c\max(j, \log n)/2},
\end{align*}
the last inequality holding by a Chernoff bound. It then follows from (\ref{eq:eSS0}) and (\ref{eq:eSS1}) that
\begin{align*}
	& \quad \p{ \exists S \in B_j, e(S,S) > (k+1+10c)\max(j,\log n)} \\
\leq 	& \quad \sum_{S \subset [n], |S|=j} e^{-9c\max(j, \log n)/2} \p{S \in B_j} \\
= & \quad e^{-9c\max(j, \log n)/2} \e{|B_j|} \\
\le & \quad e^{-9c\max(j, \log n)/2} n(4(c+2k))^j
\end{align*}
the last inequality by Proposition~\ref{prop:lagrange1}.
By assumption, $c$ is large enough that $c/720-\log(4(c+2k))> 5$, and it follows that
\begin{align*}
& \quad \p{ \exists S \in B_j, e(S,S) > (k+1+10c)\max(j,\log n)} \\
\le &  \quad e^{\log n + \log(4(c+2k))\max(j,\log n)-9c\max(j, \log n)/2} \\
<  & \quad n^{-4}.
\end{align*}
A union bound over $j \in [n]$ completes the proof.
%Since we have assumed that $c \ge x_k$, we obtain
%The remainder is similar to Lemma~\ref{lem:eSSc}: we can get $1/n^4$ bounds for both $j\ge\log n$ and $j<\log n$ cases, taking a union bound over $j\in [1, 9n/10)$ gives the result.
\end{proof}
\section{Proof of Theorem \ref{thm:main}}\label{sec:proof}
As noted in the introduction, the case when $c$ is large is more straightforward, and we handle it first.
\begin{proof}[Proof of Theorem \ref{thm:main} assuming $c>x_k$.]
For $x \ge 0$ write
\[
\Phi_0(x) =\min\left\{\frac{e(S,S^c)}{e(S)}:S~\mathrm{connected}, |S| \leq \frac{9n}{10}, x n(c/2 +k)/2 \leq e(S) \leq 4x n(c/2 +k)\right\}.
\]
Let $A$ be the event  that for all $S$ with $|S| > 9n/10$ we have $e(S) > |E(H)|$. If $A$ occurs then for all $0 \le x \le 1/2$, for all $S$ with $e(S) \le 2x|E(H)|$ we have $e(S) \le |E(H)|$ so $|S| \le 9n/10$. Also, let $A'$ be the event that $n(c/2+k)/2 \le |E(H)| \le 2n(c/2+k)$. If $A'$ occurs then for all $x\ge 0$ all $S$ with $x|E(H)| \le e(S) \le 2x|E(H)|$, we have $xn(c/2+k)/2 \le e(S) \le 4xn(c/2+k)$. It follows that on $A \cap A'$, for all $0 \le x \le 1/2$ we have $\Phi_0(x) \le \Phi(x)$.

We have $|E(H)| \eqdist nk+\mathrm{Bin}(n(n-2k-1)/2,p)$, so by a Chernoff bound, for all $n$ large enough, $\p{A'} \ge 1-n^{-3}$. Also, by Lemma \ref{9nover10}, $\p{A} \ge 1-n^{-3}$ for all $n$ sufficiently large.
%with probability at least $1-n^{-3}$ we have that $|S| \leq 9n/10$ for all $S \subset [n]$ with $e(S) \leq |E(H)|$.
It follows that with probability at least $1-2n^{-3}$, for all $0 \leq x \leq 1/2$ we have $\Phi_0(x) \leq \Phi(x)$, and thus with probability at least $1-2n^{-3}$,
\[
\sum_{i=1}^{\lceil \log_2 |E| \rceil} \Phi^{-2}(2^{-i}) \leq \sum_{i=1}^{\lceil \log_2 |E| \rceil} \Phi_0^{-2}(2^{-i}).
\]
We thus focus on bounding the latter quantity. Note that the sets considered when bounding $\Phi(2^{-i})$ decrease (in terms of $e(S)$) as $i$ increases.
Also, recall the definitions of $x_k$ and $M=M(c,k)$ from just before the statement of Lemma~\ref{lem:eSS}.

First suppose $i \geq \lfloor \log_2 (n(c/2+k)/(8M\log n))\rfloor$ and write
\[
i=\lfloor \log_2 (n(c/2+k)/(8M\log n))\rfloor + j.
\]
For all $S$ considered when bounding $\Phi_0(2^{-i})$ we have $e(S) \leq 2^{-j+6}M\log n$ and $e(S,S^c) \geq 2$, so
\begin{align*}
& \quad \sum_{j=1}^{\lceil \log_2 |E|\rceil-\lfloor \log_2 (n(c/2+k)/(8M\log n))\rfloor} \Phi_0^{-2}(2^{-\lfloor \log_2 (n(c/2+k)/(8M\log n))\rfloor-j}) \\
\leq & \quad 4^{5}M^2\log^2 n \sum_{j=0}^{\infty} 4^{-j} = \frac{ 4^{6}M^2}{3}\log^2 n.
\end{align*}

Next suppose that $i \leq \log_2 (n(c/2+k)/(8M\log n))$. In this case we have $e(S) \geq 4M  \log n$. By Lemma~\ref{lem:eSS}, with probability at least $1-n^{-3}$, for all connected sets $S$ with $|S| \le \log n$ we have $e(S,S) \le M\log n$. Since $e(S) \geq 4M \log n$ this implies that $e(S,S^c) =e(S)-2e(S,S)\geq e(S)-2M \log n \geq e(S)/2$, so $\Phi(S) \geq 1/2$. Also, by Lemmas~\ref{lem:eSSc} and~\ref{lem:eSS}, with probability at least $1-2n^{-3}$, for all connected sets $S$  with $|S| \geq \log n$, we have
\[
\frac{e(S,S^c)}{e(S)} = \frac{e(S,S^c)}{e(S,S^c) + 2 e(S,S)} \geq \frac{c|S|}{12}\frac{1}{c|S|/12 + 2M|S|} \geq \frac{c}{36M},
\]
so $\Phi(S) \geq c/(36M)$. Since $1/2 > c/(36M)$ it follows that with probability at least $1-3n^{-3}$, for all $i \leq \log_2 (n(c/2+k)/(8M\log n))$ we have $\Phi_0(2^{-i}) \geq \frac{c}{36M}$, and in this case
\[
\sum_{i=1}^{\lfloor \log_2 (n(c/2+k)/(8M\log n))\rfloor} \Phi_0^{-2}(2^{-i}) \leq \frac{6^4 M^2}{c^2} \log_2 n.
\]
Combining these bounds, we see that with probability at least $1-3n^{-3}$,
\[
\sum_{i=1}^{\lceil \log_2|E| \rceil} \Phi_0^{-2}(2^{-i}) \leq \frac{ 4^{6}M^2}{3}\log^2 n + \frac{6^4 M^2}{c^2} \log_2 n,
\]
so with probability at least $1-5n^{-3}$, $\sum_{i=1}^{\lceil \log_2|E| \rceil} \Phi^{-2}(2^{-i})$ is at most the same quantity. By Theorem~\ref{thm:fr} it follows that with probability at least $1-5n^{-3}$,
\[
\tau_{\textsc{mix}}(G) \leq C \pran{\frac{ 4^{6}M^2 \log^2 n}{3} + \frac{6^4 M^2}{c^2} \log_2 n}.
\]
This completes the proof in the case $c > x_k$. \end{proof}

For the remainder of the paper, fix $0 < c < x_k$, and let $R = \lceil \max(k,2x_1/c) \rceil$.
Also, recall the constant $\beta=\beta(c)$ from Lemma~\ref{largesets}. The remaining case of Theorem~\ref{thm:main} follows straightforwardly from the following lemma.
\begin{lem}\label{lem:smallc}
There is $\alpha=\alpha(c) > 0$ such that for all $n$ sufficiently large,
\[
\p{\exists S  \in \bigcup_{R\log n \leq j \leq (1-\beta)n} B_j(H), e_H(S,S^c) \leq \alpha |S|} \leq \frac{3R^3}{n^3}.
\]
\end{lem}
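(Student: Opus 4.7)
The strategy for the small-$c$ case is a block coarse-graining that reduces Lemma~\ref{lem:smallc} to the large-$c$ analysis already carried out. Partition $[n]$ into $n':=\lfloor n/R\rfloor$ consecutive-vertex blocks $V_1,\ldots,V_{n'}$ of $R$ vertices each; since $R\ge k$, every ring edge of $H$ is either internal to a block or joins two consecutive blocks. Define the block graph $G'$ on vertex set $[n']$ by $V_i\sim V_j$ iff there is an edge of $H$ between $V_i$ and $V_j$. Then $G'$ contains the $(n',1)$-ring (from ring edges of $H$ between adjacent blocks), and for non-consecutive pairs the block-edge events are independent with probability $q=1-(1-p)^{R^2}\approx R^2 p$. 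In particular, $n'q\to Rc$ as $n\to\infty$, so by the choice $R\ge\lceil 2x_1/c\rceil$ the block graph $G'$ sits squarely in the large-$c$ regime for $k=1$.

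The next step is to transfer Proposition~\ref{prop:lagrange1} and Lemmas~\ref{lem:eSSc},~\ref{lem:eSS} to $G'$. The Galton--Watson comparison and Lagrange inversion proof go through verbatim and give $\e|B_j(G')|\le n'(4(Rc+2))^j$, and the Chernoff / union-bound proofs of Lemmas~\ref{lem:eSSc} and~\ref{lem:eSS} go through after replacing $c$ by the effective parameter $Rc$ and $k$ by $1$. This produces, on an event of probability at least $1-O(R^3/n^3)$ (since $(n')^{-3}=O(R^3/n^3)$), the two analogs: (i) every connected $S^+\subseteq[n']$ with $\log n'\le|S^+|\le 9n'/10$ satisfies $e_{G'}(S^+,(S^+)^c)\ge (Rc)|S^+|/24$; and (ii) every connected $S^+$ satisfies $e_{G'}(S^+,S^+)\le M'\max(|S^+|,\log n')$ with $M'=M(Rc,1)$. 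Three such failure events combined by union bound account for the $3R^3/n^3$ in the statement.

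To translate back, given a connected $S\in B_j(H)$ with $R\log n\le|S|\le(1-\beta)n$, set $S^+:=\{i:V_i\cap S\ne\emptyset\}$, $I:=\{i:V_i\subseteq S\}$, and $T:=S^+\setminus I$. A path-projection argument shows $S^+$ is connected in $G'$, and $|S^+|\ge|S|/R\ge\log n\ge\log n'$. Each $T$-block hosts consecutive ring-neighbours split between $S$ and $S^c$ and thus contributes at least one $H$-edge to $E_H(S,S^c)$, and each $G'$-edge between $I$ and $[n']\setminus S^+$ is witnessed by a distinct $H$-edge in $E_H(S,S^c)$ (both block-sides being pure), so
\[
e_H(S,S^c)\ \ge\ |T|\ +\ e_{G'}(S^+,(S^+)^c)\ -\ e_{G'}(T,(S^+)^c).
\]
When $|S^+|\le 9n'/10$ the first analog gives $e_{G'}(S^+,(S^+)^c)\ge Rc|S^+|/24\ge c|S|/24$; provided the leakage $e_{G'}(T,(S^+)^c)$ can be dominated by $c|S|/48$, one obtains $e_H(S,S^c)\ge c|S|/48$ and hence contradicts $e_H(S,S^c)\le\alpha|S|$ for $\alpha=c/48$.

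The main obstacles are (a) the leakage control just mentioned, which needs an external-edge analog of Lemma~\ref{lem:eSS} for $G'$ exploiting that $T\subseteq S^+$ is small (indeed $|T|\le e_H(S,S^c)\le\alpha|S|$, so $|T|/|S^+|\le\alpha R$) in order to bound $e_{G'}(T,(S^+)^c)$ linearly in $|T|$ up to a lower-order term in $\log n$; and (b) the complementary range $|S^+|>9n'/10$, where the analog of Lemma~\ref{lem:eSSc} is unavailable. In (b) one exploits the constraint $|S|\le(1-\beta)n$: the at most $n'/10$ blocks outside $S^+$ account for at most $n/10$ of the $\ge\beta n$ vertices of $S^c$, forcing the remaining $S^c$-vertices into $T$-blocks and giving $|T|\ge(\beta-1/10)n/R$, which beats $\alpha|S|$ directly when $\beta>1/10$. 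When $\beta<1/10$ the threshold $9n'/10$ must be replaced by $(1-\beta')n'$ via an analog of Lemma~\ref{largesets} applied to $G'$, and the same argument applies with $(\beta-\beta')$ in place of $(\beta-1/10)$. Matching the constants between $\beta$, $\beta'$, $\alpha$, and the leakage estimate is the bookkeeping-heavy step, but yields a single $\alpha=\alpha(c)>0$ that works for all sufficiently large $n$.
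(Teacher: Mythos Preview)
Your block coarse-graining and the inequality $e_H(S,S^c)\ge |T|+e_{G'}(S^+,(S^+)^c)-e_{G'}(T,(S^+)^c)$ are both correct, and the overall architecture matches the paper's. The differences, and the genuine gap, lie in the two obstacles you yourself flag.

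\textbf{Leakage control.} Your proposal to ``bound $e_{G'}(T,(S^+)^c)$ linearly in $|T|$ up to a lower-order term in $\log n$'' via an external-edge analogue of Lemma~\ref{lem:eSS} is the weak point. Such a bound cannot hold uniformly over \emph{all} $T\subseteq S^+$: choosing $T$ to consist of the highest-degree vertices of $S^+$ gives $e_{G'}(T,(S^+)^c)$ of order $|T|\cdot\log n'/\log\log n'$, not $|T|$. To make a union bound over pairs $(S^+,T)$ work you would need a large-deviation estimate whose exponent beats both the connected-set count $(4(Rc+2))^{|S^+|}$ and the subset count $\binom{|S^+|}{|T|}$; you have not set this up, and it is exactly here that the paper introduces the missing idea. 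The paper works with \emph{two} scales $\delta\ll\epsilon$: one assumes $e_H(S,S^c)\le\delta|S|$, so that $|S^+\setminus S|\le R\delta|S|$ is tiny, and then bounds $e_H(S^+\setminus S,(S^+)^c)$ directly in $H$ (not in $G'$) by the finer Chernoff bound, obtaining failure probability $\exp(-\epsilon Rc\,|S|(\log(\epsilon/\delta)-1))$. The freedom to send $\delta/\epsilon\to 0$ is what makes this exponent dominate $\log(4(c+2k))\cdot|S|$, so a union bound over connected $S$ alone (no sum over $T$) succeeds. This two-parameter trick is the crux of the small-$c$ proof, and your single-parameter scheme does not supply it.

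\textbf{Large sets.} For $|S|$ between $9n/(10R)$ and $(1-\beta)n$ the paper does not argue via blocks at all. It observes that if $e_H(S,S^c)\le\gamma n$ then $S$, viewed as a subset of the $n$-cycle, has at most $\gamma n$ arcs; it then bounds the number of such subsets by $2n\binom{n+2\gamma n}{2\gamma n}$ and combines this with a direct Chernoff bound on $e_H(S,S^c)\succeq\mathrm{Bin}((9n/10R)\cdot\beta n,p)$. Your suggestion to invoke Lemma~\ref{largesets} on $G'$ is a misattribution (that lemma concerns $e(S)$ versus $|E|$, not edge expansion), and the $|T|\ge(\beta-\beta')n/R$ route, while plausible, would still need you to push Lemma~\ref{lem:eSSc} out to $(1-\beta')n'$ and reconcile $\beta'(c')$ with $\beta(c)$---bookkeeping you acknowledge but do not do. The paper's cycle-arc argument avoids all of this.
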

We provide the proof of Lemma~\ref{lem:smallc} at the end of the paper.
\begin{proof}[Proof of Theorem \ref{thm:main} assuming $c \le x_k$.]
For $x \ge 0$ write
\[
\Phi_0(x) =\min\left\{\frac{e(S,S^c)}{e(S)}:S~\mathrm{connected}, |S| \leq (1-\beta) n, x n(c/2 +k)/2 \leq e(S) \leq 4x n(c/2 +k)\right\}.
\]
As in the case $c > x_k$, by a Chernoff bound and by Lemma~\ref{largesets}, for all $n$ sufficiently large, with probability at least $1-2n^{-3}$ we have
\[
\sum_{i=1}^{\lceil \log_2 |E| \rceil} \Phi^{-2}(2^{-i}) \leq \sum_{i=1}^{\lceil \log_2 |E| \rceil} \Phi_0^{-2}(2^{-i}),
\]
and the remainder of the proof is just as in the case $c > x_k$, but using Lemma~\ref{lem:smallc} in place of Lemma~\ref{lem:eSSc}.
\end{proof}
It thus remains to prove Lemma~\ref{lem:smallc}; before doing so, we briefly describe our approach. We shall divide vertices of $H$ into groups of size $R$, each containing $R$ consecutive vertices. We view each group as a new single vertex; two new vertices are connected if there is an edge connecting their constituent sets. This yields an auxiliary graph $H'$, whose distribution is that of an $(n',1,p')$ Newman--Watts small world, for suitable $n'$ and $p'$. We will shortly see that $p'=c'/n'$ for some $c' > x_1$, so all ``large-$c$'' results can be applied to $H'$.

To translate edge expansion results from $H'$ into corresponding results for $H$, we proceed as follows. Given a set $S$ of vertices of $H$, we consider the \emph{blow-up} $S^+$ of $S$, which is the collection of all vertices of $H$ belonging to the same group as some element of $S$. The idea is that in most cases, the event that $e(S,S^c)$ is small relative to $|S|$ should be nearly identical to the event that $e(S^+,(S^+)^c)$ is small relative to $|S^+|$. If this were always true, Lemma \ref{lem:eSSc} would then yield bounds for the number of edges leaving $S^+$, which would in turn yield strong bounds on the probability that $e(S^+,(S^+)^c)< \epsilon ~ |S^+|$, where $\epsilon>0$ will be a function of $R$.

The above line of argument relies upon the intuition that the size of the blow-up $S^+$ should be essentially a constant factor greater than that of $S$. Since the ratio $|S^+|/|S|$ is in fact a random quantity, to make the above argument work, we end up needing to additionally show that $e(S^+,(S^+)^c)$ is very unlikely to be large if $e(S,S^c)/|S|$ is extremely small. In order to quantify the notion of ``extremely small'', we are forced to introduce a third parameter $\delta >0$ with $\delta$ much smaller than $\epsilon$. We now turn to the details.

\begin{proof}[Proof of Lemma~\ref{lem:smallc}]
%Fix an integer $R > k$ large enough that $Rc/2 > x_1$.
We assume for simplicity that $R$ divides $n$ -- the general case is practically identical -- and write $n'=n/R$. For $i \in [n']$ let $w_i = \{(i-1)R+j, 1 \leq j \leq R\}$. We form an auxiliary graph $H'=(V',E')$ with $V'=\{w_i,i \in [n']\}$ by adding an edge between $w_i$ and $w_j$ if there is some edge from an element of $w_i$ to an element of $w_j$ in $H$. It is easily verified (using the fact that $R>k$) that $H'$ is an $(n',1,p')$ Newman--Watts small world, with $p'=\p{Bin(R^2,p)>0} > Rc/2n'=c'/n'$ (where $c'=Rc/2$) for all $n$ sufficiently large. Note that since $c'=Rc/2 > x_1$, it follows that we may apply Lemma~\ref{lem:eSSc} to $H'$ -- this is the only way we will use this bound on $R$.

Given $S \subset [n]$, write $I' = \{i \in n': w_i \cap S \neq \emptyset\}$, let $S' = \{w_i, i \in I'\}$, and write $S^+ = \bigcup_{i \in S'} w_i$.
Now write $j=|S|$. As in the proof of Lemma~\ref{lem:eSS}, we partition the event that $S \in B_j(H)$ into $E_1(S),\ldots,E_r(S)$ according to the first spanning tree appearing in $S$. Fix $\epsilon = c/(12R(2Rc+1))$ and let $\delta>0$ be small enough that $\epsilon c\ge 2k\delta$ and that $\epsilon Rc\left(\log\left(\frac{\epsilon}{\delta}\right)-1\right)\ge 5+ \log(4(c+2k))$.  For all $1 \leq i \leq r$ we then have
\begin{align*}
& \quad \p{e_H(S^+\setminus S, (S^+)^c) > 2\epsilon Rc j ~|~ e_H(S,S^c) \leq \delta j, E_i(S)} \\
\leq & \quad \p{e_H(S^+\setminus S, (S^+)^c) > (\epsilon c+2k\delta) Rj ~|~ e_H(S,S^c) \leq \delta j, E_i(S)} \\
\leq & \quad \p{e_H(S^+\setminus S, (S^+)^c) > (\epsilon c+2k\delta) Rj ~|~ e_H(S,S^c) \leq \delta j, t_i \subset H}.
\end{align*}
The first inequality is true since we pick $\delta$ such that $\epsilon c\ge 2k\delta$ and the second inequality holds by Corollary \ref{cor:FKG}.
Now write $g(S) = |\{i \in S': |S \cap w_i| < |w_i|\}|$, so $g(S)$ is the number of sets $w_i$ that intersect $S$ but are not covered by $S$. It is easily checked that $e_H(S,S^c) \geq g(S)$; the extremal case is that for each $i \in S'$, $e_H(S \cap w_i,w_i \setminus S)=1$, while for $i,j \in S'$ with $i \ne j$, $e_H(w_i,w_j)=0$. %that each $w_i$ counted by $g(S)$ contributes $1$ for $e_H(S,S^c)$ from inside itself, while between $w_i$s there is no contribution to $e_H(S,S^c)$.

Next, suppose that $S \subset [n]$ satisfies $e_H(S,S^c) \leq \delta j$. Then we must have $g(S) \le \delta j$, and it follows that $|S^+\setminus S| \leq R \delta j$. Under this conditioning, $e_H(S^+\setminus S,(S^+)^c)$ is stochastically dominated by $2kR\delta j+\mathrm{Bin}(R \delta jn,p)$, and is independent of the event that $t_i \subset H$ since they are determined by disjoint sets of edges, so
by the finer of the Chernoff upper bounds,
\begin{equation*}\label{fewedges}
\p{e_H(S^+\setminus S,(S^+)^c) > (\epsilon c +2k\delta) Rj ~|~e_H(S,S^c) \leq \delta j, t_i \subset H} \leq \exp\pran{-\epsilon Rc j \left(\log\left(\frac{\epsilon}{\delta}\right)-1\right)}.
\end{equation*}
It follows that for $j \ge R \log n$, writing
\[
\cS_1 = \{S \subset [n], S \in B_j(H), e_H(S,S^c) \leq \delta j, e_H(S^+\setminus S,(S^+)^c) > 2\epsilon Rcj\},
\]
we have
\begin{align*}
\e|\cS_1| & \leq \sum_{S \subset [n], |S|=j} \sum_{1 \leq i \leq r}  \p{E_i(S), e_H(S,S^c) \leq \delta |S|, e_H(S^+\setminus S, (S^+)^c) > 2\epsilon Rc |S|} \\
	& \leq \exp\pran{-\epsilon Rc j \left(\log\left(\frac{\epsilon}{\delta}\right)-1\right)} \e\left|\{S \subset [n], |S|=j, H[S]\mbox{ connected}\}\right| \\
	& \leq \exp\pran{-\epsilon Rc j \left(\log\left(\frac{\epsilon}{\delta}\right)-1\right)} \cdot n \cdot (4(c+2k))^j \\
	& \leq n^{-4}
\end{align*}
the second-to-last inequality by Proposition~\ref{prop:lagrange1}, and the last since we chose $\delta$ such that
\[
\epsilon R c \left(\log\left(\frac{\epsilon}{\delta}\right)-1\right) \geq 5 + \log(4(c+2k)).
\]
and since $j\ge R\log n \ge \log n$.
It follows by a union bound over $R \log n \leq j \leq 9n/10$ and Markov's inequality that
\begin{equation}\label{wrapup1}
\p{\exists S  \in \bigcup_{R \log n \leq j \leq 9n/10} B_j(H), e_H(S,S^c) \le \delta |S|, e_H(S^+\setminus S,(S^+)^c) > 2\epsilon Rc|S|} \leq \frac{1}{n^3}.
\end{equation}
Next, write
\[
\cS_2 = \left\{ S  \in \bigcup_{R \log n \leq j \leq 9n/(10R)} B_j(H), e_H(S,S^c) \le \epsilon |S|, e_H(S^+\setminus S,(S^+)^c) \leq 2\epsilon Rc|S|\right\}.
\]
For any $S \in \cS_2$, we have
\begin{align*}
e_H(S^+,(S^+)^c) & =e_H(S, (S^+)^c)+e_H(S^+\backslash S, (S^+)^c) \\
				& \le e_H(S,S^c)+e_H(S^+\backslash S, (S^+)^c) \\
				& \le \epsilon (2Rc+1) |S| \\
				& \le \epsilon(2Rc+1)R|S'|\\
				& = c'|S'|/12.
\end{align*}
the last equality by our choice of $\epsilon$.  It follows that $e_{H'}(S',(S')^c) \leq c'|S'|/12$.
Furthermore, since $|S| \geq R \log n$ we have $|S'| \geq \log n \ge \log n'$, and since $|S| \leq 9n/(10R)$ we have $|S'| \leq 9n/(10R) = 9n'/10$.
It follows by Lemma~\ref{lem:eSSc} that
\begin{eqnarray}\label{wrapup2}
% \nonumber to remove numbering (before each equation)
   \nonumber&& \p{\exists S  \in \bigcup_{R \log n \leq j \leq 9n/(10R)} B_j(H), e_H(S,S^c) \le \epsilon |S|, e_H(S^+\setminus S,(S^+)^c) \leq 2\epsilon Rc|S|} \\
   &\le& \p{\exists S' \in \bigcup_{\log n' \leq j\leq 9n'/10} B_j(H'), e_{H'}(S', (S')^c)\le c'|S'|/12} \leq \frac{1}{(n')^{3}}
\end{eqnarray}

%\begin{equation}\label{wrapup2}
%\p{\exists S  \in \bigcup_{R \log n \leq j \leq 9n/(10R)} B_j(H), e_H(S,S^c) \le \epsilon |S|, e_H(S^+\setminus S,(S^+)^c) \leq 2\epsilon Rc|S|} \leq \frac{1}{(n')^{3}}.
%\end{equation}
Next, for any $m \ge 1$, if $e_H(S, S^c) \le m$ then, viewed as a subset of a cycle of length $n$, $S$ must have at most $m$ connected components. The number of subsets of an $n$-cycle with at most $m$ connected components is $2 n{n+2m-1 \choose 2m-1}$. (This is a straightforward combinatorial exercise but may be seen as follows: the factor $n$ chooses a starting point on the cycle, the factor ${n+2m-1 \choose 2m-1}$ chooses the points on the cycle at which membership in $S$ alternates, and the factor $2$ accounts for whether or not the starting point belongs to $S$.)

It follows that for any $\gamma > 0$, the number of subsets of an $n$-cycle with at most $\gamma n$ connected components %(still when viewed as a subset of an $n$-cycle). The number of subsets of an $n$-cycle with at most $\gamma n$ connected components is at most $2n{n+2\gamma n-1 \choose 2\gamma n-1}$, where the factor ${n+2\gamma n-1 \choose 2\gamma n-1}$ is the number of nonnegative integer solutions to $x_1+x_2+\cdots+x_{2m}=n$ for $m=\gamma n$, factor $n$ accounts for the choice of starting point on the cycle and factor $2$ accounts for whether to put the block containing starting point in $S$ or not.
is at most
\begin{align}
2n{n+\lfloor 2\gamma n\rfloor \choose \lfloor 2\gamma n\rfloor}
& \le 2n\left(\frac{e(1+2\gamma)n}{2\gamma n}\right)^{2\gamma n} \nonumber\\
& =\exp\left(\log(2n)+n\cdot 2\gamma\left(1+\log\left(1+\frac{1}{2\gamma}\right)\right)\right) \label{connectedcount}
%& =\exp(o_{\gamma\rightarrow 0}(n)).
\end{align}
Since $x(1+\log(1/(2x)))\to 0$ as $x \downarrow 0$, we may choose $0 < \gamma < 9\beta c/(20R)$ small enough that (\ref{connectedcount}) is at most $\exp(n \cdot 9\beta c/(160R))$ for all $n$ sufficiently large.

Finally, for a fixed set $S$ with $9n/(10R)\le|S|\le(1-\beta)n$, the probability that $e_H(S, S^c)\le \gamma n$ is bounded above by $\p{Bin(\frac{9n}{10R}\cdot \beta n, p)\le \gamma n}$ since $e_H(S, S^c)$ stochastically dominates $Bin(|S|(n-|S|), p)$ and $|S|\ge \frac{9n}{10R}, n-|S|\ge \beta n$. Since $\frac{10R\gamma}{9\beta c}\le \frac{1}{2}$, by a Chernoff bound we have
\begin{equation}\label{lasttool}
\p{e_H(S, S^c)\le \gamma n} \le \p{Bin(\frac{9n}{10R}\cdot \beta n, p)\le \gamma n}\le \exp\left(-\frac{9\beta c}{80R} n\right).
\end{equation}
Since {\em all} sets $S \subset [n]$ with at least $\gamma n$ components (still viewed as subsets of the $n$-cycle) have $e(S,S^c) \ge \gamma n$, it follows by (\ref{connectedcount}), (\ref{lasttool}), and a union bound over sets with at most $\gamma n$ components that for all $n$ sufficiently large, %, there is $\gamma > 0$ such that for all $n$ large enough,
\begin{align}\label{wrapup3}
\p{\exists S  \in \bigcup_{9n/(10R) \leq j \leq (1-\beta)n} B_j(H), e_H(S,S^c) \leq \gamma n}
& \le 2n{n+\lfloor 2\gamma n\rfloor \choose \lfloor 2\gamma n\rfloor}
 \exp\left(-\frac{9\beta c}{80R} n\right) \nonumber\\
 & \le \exp\left(-\frac{9\beta c}{160R} n\right) \nonumber \\
& \leq \frac{1}{n^3}\, .
\end{align}
%where $\beta=\beta(c)$ is as in Corollary~\ref{cor:9over10}.
Writing $\alpha=\min(\gamma, \epsilon, \delta)$, it follows from
(\ref{wrapup1}), (\ref{wrapup2}), and (\ref{wrapup3}) that
\[
\p{\exists S  \in \bigcup_{R\log n \leq j \leq (1-\beta)n} B_j(H), e_H(S,S^c) \leq \alpha |S|} \leq \frac{3}{(n')^3}.
\]
Since $n' = n/R$ this completes the proof.
%Since, by Corollary~\ref{cor:9over10}, with probability at least $1-(1-\beta)^n$ all sets $S \subset [n]$ with $|S| \geq (1-\beta)n$
%have $e_H(S) \geq |E(H)|$, the remainder of the proof is exactly as in the case $c > x_k$.
\end{proof}
%\section{Conclusion}\label{sec:conc}

\vspace{-0.25cm}
\bibliographystyle{plainnat}

\begin{thebibliography}{0}
\providecommand{\natexlab}[1]{#1}
\providecommand{\url}[1]{\texttt{#1}}
\expandafter\ifx\csname urlstyle\endcsname\relax
  \providecommand{\doi}[1]{doi: #1}\else
  \providecommand{\doi}{doi: \begingroup \urlstyle{rm}\Url}\fi

\end{thebibliography}


\begin{thebibliography}{20}
\providecommand{\natexlab}[1]{#1}
\providecommand{\url}[1]{\texttt{#1}}
\expandafter\ifx\csname urlstyle\endcsname\relax
  \providecommand{\doi}[1]{doi: #1}\else
  \providecommand{\doi}{doi: \begingroup \urlstyle{rm}\Url}\fi

\bibitem[Bollob{\'a}s and Chung(1988)]{bollobas1988diameter}
B.~Bollob{\'a}s and F.R.K. Chung.
\newblock The diameter of a cycle plus a random matching.
\newblock \emph{SIAM Journal on discrete mathematics}, 1:\penalty0 328--333, 1988.

\bibitem[Brightwell and Winkler(1990)]{brightwell1990maximum}
G.~Brightwell and P.~Winkler.
\newblock Maximum hitting time for random walks on graphs.
\newblock \emph{Random Structures \& Algorithms}, 1\penalty0 (3):\penalty0
  263--276, 1990.

\bibitem[Condamin et~al.(2007)Condamin, B{\'e}nichou, Tejedor, Voituriez, and
  Klafter]{condamin2007first}
S.~Condamin, O.~B{\'e}nichou, V.~Tejedor, R.~Voituriez, and J.~Klafter.
\newblock First-passage times in complex scale-invariant media.
\newblock \emph{Nature}, 450\penalty0 (7166):\penalty0 77--80, 2007.

\bibitem[Durrett(2007)]{durrett2007random}
R.~Durrett.
\newblock \emph{Random graph dynamics}, volume~20.
\newblock Cambridge University Press, 2007.

\bibitem[Farkas et~al.(2001)Farkas, Der\'enyi, Barab\'asi, and
  Vicsek]{farkas01spectra}
Ill\'es~J. Farkas, Imre Der\'enyi, Albert-L\'aszl\'o Barab\'asi, and Tam\'as
  Vicsek.
\newblock Spectra of ``real-world'' graphs: Beyond the semicircle law.
\newblock \emph{Phys. Rev. E}, 64\penalty0 (2):\penalty0 026704, Jul 2001.
\newblock \doi{10.1103/PhysRevE.64.026704}.

\bibitem[Fountoulakis and Reed(2007)]{fountoulakis2007faster}
N.~Fountoulakis and B.A. Reed.
\newblock Faster mixing and small bottlenecks.
\newblock \emph{Probability Theory and Related Fields}, 137\penalty0
  (3):\penalty0 475--486, 2007.

\bibitem[Gallos et~al.(2007)Gallos, Song, Havlin, and Makse]{gallos2007scaling}
L.K. Gallos, C.~Song, S.~Havlin, and H.A. Makse.
\newblock Scaling theory of transport in complex biological networks.
\newblock \emph{Proceedings of the National Academy of Sciences}, 104\penalty0
  (19):\penalty0 7746, 2007.

\bibitem[Janson et~al.(2000)Janson, {\L}uczak, and Ruci\'nski]{janson00random}
S.~Janson, T.~{\L}uczak, and A.~Ruci\'nski.
\newblock \emph{Random Graphs}.
\newblock Wiley, New York, 2000.

\bibitem[Jasch and Blumen(2001)]{jasch2001target}
F.~Jasch and A.~Blumen.
\newblock Target problem on small-world networks.
\newblock \emph{Physical Review E}, 63\penalty0 (4):\penalty0 041108, 2001.

\bibitem[Jespersen et~al.(2000)Jespersen, Sokolov, and
  Blumen]{jespersen2000relaxation}
S.~Jespersen, IM~Sokolov, and A.~Blumen.
\newblock Relaxation properties of small-world networks.
\newblock \emph{Arxiv preprint cond-mat/0004214}, 2000.

\bibitem[Levin et~al.(2008)Levin, Peres, and Wilmer]{levin08markov}
David~A. Levin, Yuval Peres, and Elizabeth~L. Wilmer.
\newblock \emph{Markov Chains and Mixing Times}.
\newblock American Mathematical Society, 2008.

\bibitem[Lovasz and Winkler(1998)]{lovasz1998mt}
L.~Lovasz and P.~Winkler.
\newblock {Mixing Times}.
\newblock \emph{Microsurveys in Discrete Probability: Dimacs Workshop, June
  2-6, 1997}, 1998.

\bibitem[Montenegro and Tetali(2006)]{montenegro2006mathematical}
R.~Montenegro and P.~Tetali.
\newblock Mathematical aspects of mixing times in markov chains.
\newblock \emph{Theoretical Computer Science}, 1\penalty0 (3):\penalty0
  237--354, 2006.

\bibitem[Newman and Watts(1999{\natexlab{a}})]{newman1999renormalization}
M.E.J. Newman and DJ~Watts.
\newblock Renormalization group analysis of the small-world network model.
\newblock \emph{Physics Letters A}, 263\penalty0 (4-6):\penalty0 341--346,
  1999{\natexlab{a}}.

\bibitem[Newman and Watts(1999{\natexlab{b}})]{newman1999scaling}
M.E.J. Newman and D.J. Watts.
\newblock Scaling and percolation in the small-world network model.
\newblock \emph{Physical Review E}, 60\penalty0 (6):\penalty0 7332--7342,
  1999{\natexlab{b}}.

\bibitem[Newman and Watts(2006)]{newman2006structure}
M.E.J. Newman, A.-L. ~Barab\'{a}si, and D.J. Watts.
\newblock \emph{The structure and dynamics of networks}.
\newblock Princeton Univ Pr, 2006.

\bibitem[Noh and Rieger(2004)]{noh2004random}
J.D. Noh and H.~Rieger.
\newblock Random walks on complex networks.
\newblock \emph{Physical review letters}, 92\penalty0 (11):\penalty0 118701,
  2004.

\bibitem[Riordan and Wormald(2010)]{riordan2010diameter}
O.~Riordan and N.~Wormald.
\newblock The diameter of sparse random graphs.
\newblock \emph{Combinatorics, Probability and Computing}, 19\penalty0
  (5-6):\penalty0 835--926, 2010.

\bibitem[Stanley(1999)]{stanley1999enumerative}
Richard~P. Stanley.
\newblock \emph{Enumerative Combinatorics. Vol 2.}
\newblock Cambridge University Press, Cambridge, 1999.

\bibitem[Watts and Strogatz(1998)]{watts1998collective}
D.J. Watts and S.H. Strogatz.
\newblock Collective dynamics of ``small-world'' networks.
\newblock \emph{Nature}, 393\penalty0 (6684):\penalty0 440--442, 1998.

\end{thebibliography}

\appendix

\end{document}